\newlength{\abovebis} 
\newlength{\belowbis} 
\newlength{\aboveshortbis} 
\newlength{\belowshortbis} 
\everydisplay\expandafter{%
  \the\everydisplay 
  \advance\abovedisplayskip\abovebis 
  \advance\belowdisplayskip\belowbis 
  \advance\abovedisplayshortskip\aboveshortbis 
  \advance\belowdisplayshortskip\belowshortbis 
}
\def\R{\mathbb{R}}
\def\N{\mathbb{N}}
\def\C{\mathbb{C}}
\def\Ree{\mathrm{Re}}
\def\Imm{\mathrm{Im}}
\def\supp{\mathrm{supp}\,}
\def\E{\mathcal{E}}
\def\SE{\Sigma_E}
\theoremstyle{plain}
\newtheorem{lem}{Lemma}[section]
\newtheorem{theo}[lem]{Theorem}
\newtheorem{cor}[lem]{Corollary}
\newtheorem{prop}[lem]{Proposition}
\theoremstyle{definition}
\newtheorem{rem}{Remark}[section]
\numberwithin{equation}{section}
\begin{document}
\title[H\"older-logarithmic stability estimate in 2D at positive energy]{A H\"older-logarithmic stability estimate for an inverse problem in two dimensions}
\author{Matteo Santacesaria}
\address[M. Santacesaria]{Centre de Mathématiques Appliquées -- UMR 7641, \'Ecole Polytechnique, 91128, Palaiseau, France}
\email{santacesaria@cmap.polytechnique.fr}
\subjclass{35R30; 35J15}
\keywords{Schr\"odinger equation, global stability in 2D, increasing stability, positive energy, generalised analytic functions, Riemann-Hilbert problem}
\begin{abstract}
The problem of the recovery of a real-valued potential in the two-dimensional Schr\"odinger equation at positive energy from the Dirichlet-to-Neumann map is considered. It is know that this problem is severely ill-posed and the reconstruction of the potential is only logarithmic stable in general. In this paper a new stability estimate is proved, which is explicitly dependent on the regularity of the potentials and on the energy. Its main feature is an efficient \textit{increasing stability} phenomenon at sufficiently high energies: in some sense, the stability rapidly changes from logarithmic type to H\"older type. The paper develops also several estimates for a non-local Riemann-Hilbert problem which could be of independent interest.
\end{abstract}

\maketitle

\section{Introduction}
This paper is the last of a series of four papers focusing on stability estimates for the Gel'fand-Calder\'on problem on the plane. In the first paper, \cite{NS}, a first global stability estimate is proved. The second and the third paper, \cite{S1, S2}, deal with stability estimates in the zero-energy and negative-energy case, respectively, which explicitly depend on the energy and on the regularity of potentials. The present work covers the last and maybe more interesting case when the energy is supposed to be positive.

The Gel'fand-Calder\'on problem concerns the Schr\"odinger equation at fixed energy $E$,
\begin{equation} \label{equa}
(-\Delta + v)\psi = E \psi \quad \text{on } D, \quad E \in \R,
\end{equation}
where $D$ is a open bounded domain in $\R^2$ and $v \in L^{\infty}(D)$ (we will refer to $v$ as a \textit{potential}).
Under the assumption that
\begin{equation} \label{direig}
0 \textrm{ is not a Dirichlet eigenvalue for the operator } - \Delta + v -E\textrm{ in } D,
\end{equation}
we can define the Dirichlet-to-Neumann operator $\Phi(E) :H^{1/2}(\partial D) \to H^{-1/2}(\partial D)$, corresponing to the potential $v$, as follows:
\begin{equation} \label{defdtn1}
\Phi(E)f = \left. \frac{\partial u}{\partial \nu}\right|_{\partial D},
\end{equation}
where $f \in H^{1/2}(\partial D)$, $\nu$ is the outer normal of $\partial D$, and $u$ is the $H^1(D)$-solution of the Dirichlet problem
\begin{equation}
(-\Delta + v)u = Eu \; \textrm{on} \; D, \; \; \; u|_{\partial D}=f.
\label{schr}
\end{equation}
This construction gives rise to the so-called Gel'fand-Calder\'on problem.

\textbf{Problem 1.} Given $\Phi(E)$ for a fixed $E \in \R$, find $v$ on $D$.

This problem can be considered as the Gel’fand inverse boundary value problem for the two-dimensional Schr\"odinger equation at fixed energy (see \cite{G}, \cite{N1}). At zero energy this problem can also be seen as a generalization of the Calder\'on problem of the electrical impedance tomography (see \cite{C}, \cite{N1}).

Note that this problem is not overdetermined, in the sense that we consider the reconstruction of a function $v$ of two variables from inverse problem data dependent on two variables.\smallskip

In this paper we study interior stability estimates, i.e. we want to prove that given two Dirichlet-to-Neumann operators $\Phi_1(E)$ and $\Phi_2(E)$, corresponding to potentials $v_1$ and $v_2$ on $D$, we have that
\begin{equation} \nonumber
\|v_1 - v_2\|_{L^{\infty}(D)} \leq \omega \left( \| \Phi_1(E) - \Phi_2(E)\|_*\right),
\end{equation}
where the function $\omega(t) \to 0$ as fast as possible as $t \to 0$ at any fixed $E$ and $\| \cdot \|_*$ is some operator norm. The explicit dependence of $\omega$ on $E$ is analysed as well.

There is a wide literature on the Gel'fand-Calder\'on inverse problem. In the case of complex-valued potentials the global injectivity of the map $v \to \Phi$ was firstly proved in \cite{N1} for $D \subset \R^d$ with $d \geq 3$ and in \cite{B} for $d = 2$ with $v \in L^p$: in particular, these results were obtained by the use of global reconstructions developed in the same papers. A global logarithmic stability estimate for Problem 1 for $d \geq 3$ was first found by Alessandrini in \cite{A}. In the two-dimensional case the first global stability estimate was given in \cite{NS}. In \cite{IN} logarithmic stability was proved in dimension $d \geq 2$ without condition \eqref{direig}, using more general boundary data (impedance boundary map). For Lipschitz stability estimates concerning this and similar inverse problems with finite dimensional restrictions see \cite{AV}, \cite{BV}, \cite{BDQ} and \cite{Bou}.

In \cite{S1} and in \cite{S2} we considered Problem 1 at zero and negative energy, respectively, and answered the following question: how the stability estimates vary with respect to the smoothness of the potentials and the energy. 

This paper completes the preceding works by considering the positive energy case.

We will assume for simplicity that
\begin{equation} \label{cv1}
\begin{split}
&D \text{ is an open bounded domain in } \R^2, \qquad \partial D \in C^2, \\
&v \in W^{m,1}(\R^2) \text{ for some } m > 2, \quad \bar v = v, \quad \mathrm{supp} \; v \subset D,
\end{split}
\end{equation} 
where
\begin{align}
&W^{m,1}(\R^2) = \{ v \; : \; \partial^J v \in L^1(\R^2),\; |J| \leq m \}, \qquad m \in \N \cup \{0\},\\ \nonumber
&J \in (\N \cup \{0\})^2, \qquad |J| = J_1+J_2, \qquad \partial^J v(x) = \frac{\partial^{|J|} v(x)}{\partial x_1^{J_1} \partial x_2^{J_2}}.
\end{align}
Let
\begin{equation} \nonumber
\|v\|_{m,1} = \max_{|J| \leq m} \| \partial^J v \|_{L^1(\R^2)}.
\end{equation}
We will need the following regularity condition:
\begin{equation} \label{cv2}
E > E_1, 
\end{equation}
where $E_1 = E_1(\|v\|_{m,1},D)$ or, roughly speaking, $E$ is sufficiently great with respect to some appropriate norm of the potential. This condition implies, in particular, that the Faddeev eigenfunctions are well-defined on the entire fixed-energy surface in the spectral parameter (see Section \ref{sec2} and Remark \ref{reme}).

\begin{theo} \label{maintheo}
Let the conditions \eqref{direig}, \eqref{cv1}, \eqref{cv2} hold for the potentials $v_1, v_2$, where $D$ is fixed, and let $\Phi_1(E)$ , $\Phi_2(E)$ be the corresponding Dirichlet-to-Neumann operators at fixed positive energy $E > 0$. Let $\|v_j\|_{m,1} \leq N$, $j=1,2$, for some $N >0$. Then there exists a constant $C_1 = C_1(D, N,m)$ such that for any $0 < \tau \leq  1$, we have
\begin{align} \label{est2}
\|v_2-v_1\|_{L^{\infty}(D)}&\leq C_1\left( {E}\delta^{\tau}+ \left(\sqrt{E}+(1-\tau) \log(3+\delta^{-1})\right)^{-(m-2)}\right),
\end{align}
for every $\delta < \tilde \delta({\tau})$, where $\delta =\|\Phi_2(E)-\Phi_1(E)\|_{L^{\infty}(\partial D)\to L^{\infty}(\partial D)}$.
\end{theo}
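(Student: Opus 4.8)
The plan is to carry out the reconstruction of the potentials through Faddeev's exponentially growing solutions and the associated non-local Riemann--Hilbert/$\bar\partial$ problem set up in Section \ref{sec2}, and then to turn the stability of that reconstruction into \eqref{est2} by splitting the spectral parameter into a bounded zone, controlled by the data, and a tail, controlled by the a priori bound $N$. Recall that at fixed positive energy the complex momenta lie on the surface $\{k\in\C^2:k\cdot k=E\}$, parametrised by $\lambda\in\C\setminus\{0\}$, that the Faddeev eigenfunctions $\psi_j(\cdot,\lambda)$ grow essentially like $e^{c\rho}$ with $\rho=|\Imm k|$ the relevant growth parameter, and that $v_j$ is recovered from the associated scattering data, which I write $h_j(\lambda)$, via a generalised-analytic-function equation. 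Writing $\rho$ for a cutoff on $|\Imm k|$, I would bound $\|v_2-v_1\|_{L^\infty(D)}$ by the contribution of $\{|\Imm k|\le\rho\}$, estimated through $\delta$, plus the contribution of the tail $\{|\Imm k|>\rho\}$, estimated through $N$.

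For the bounded zone I would first recover the boundary traces $\psi_j|_{\partial D}$, and hence the scattering data, from $\Phi_j(E)$ by solving the boundary integral equation of Section \ref{sec2}; the invertibility of that equation on the relevant part of the fixed-energy surface is exactly where the condition \eqref{cv2}, $E>E_1$, enters. An Alessandrini-type identity
\begin{equation}\nonumber
\int_D (v_1-v_2)\,\psi_1(x,\lambda)\,\psi_2(x,\lambda)\,dx=\int_{\partial D}\psi_2\,(\Phi_1-\Phi_2)\psi_1\,d\sigma
\end{equation}
then gives $|h_2(\lambda)-h_1(\lambda)|\le c\,e^{c'\rho}\delta$ on $\{|\Imm k|\le\rho\}$, the exponential factor coming from $\sup|\psi_1|\sup|\psi_2|$; the dependence of the constants on $E$ must be made explicit here. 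For the tail I would use $\|v_j\|_{m,1}\le N$: the scattering data inherit the Fourier decay of the potentials, yielding $|h_j(\lambda)|\le c\,N\,(\sqrt E+\rho)^{-(m-2)}$ for large $|\Imm k|$, where the exponent $-(m-2)$ reflects $m>2$ and the effective magnitude $|k|\asymp\sqrt E+\rho$ of the momentum on the energy surface.

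Feeding both bounds into the stability estimates for the non-local Riemann--Hilbert problem announced in the abstract and established in the preceding sections, one obtains
\begin{equation}\nonumber
\|v_2-v_1\|_{L^\infty(D)}\le C\,\left(E\,e^{c'\rho}\delta+(\sqrt E+\rho)^{-(m-2)}\right),
\end{equation}
the factor $E$ on the first term originating from the measure on the fixed-energy surface in the reconstruction formula. It then remains to optimise in $\rho$: choosing $\rho$ of order $(1-\tau)\log(3+\delta^{-1})$ renders $e^{c'\rho}\delta$ comparable to $\delta^{\tau}$, while the tail becomes $(\sqrt E+(1-\tau)\log(3+\delta^{-1}))^{-(m-2)}$, which is precisely \eqref{est2}; the restriction $\delta<\tilde\delta(\tau)$ guarantees that this $\rho$ lies in the admissible range where the two estimates above hold. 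The parameter $\tau$ thus tunes the cutoff and accounts for the transition from logarithmic stability ($\tau\to0$) to H\"older stability ($\tau\to1$).

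The main obstacle I expect is twofold. First, one must establish the stability of the non-local Riemann--Hilbert reconstruction \emph{uniformly in} $E$, which is the delicate analytic core of the argument. Second, one must track the precise joint dependence on $E$ and $\rho$ both in the growth of the Faddeev eigenfunctions and in the invertibility of the boundary integral equation, since it is exactly this bookkeeping — in particular the normalisation fixing the constant $c'$ through the geometry of the fixed $D$ — that converts the raw splitting into the clean $\sqrt E$-shift in the logarithmic term and the H\"older exponent $\tau$ in the first term.
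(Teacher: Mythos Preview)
Your outline matches the paper's strategy: Alessandrini-type identity plus exponential growth of Faddeev solutions to control the bounded spectral zone by $\delta$, a priori decay of the scattering data to control the tail, and optimisation of the cutoff to produce the $\sqrt E$-shift and the H\"older exponent $\tau$. The final substitution $\rho\sim(1-\tau)\log(3+\delta^{-1})$ is exactly what the paper does (with $\kappa=\frac{1-\tau}{4(l+1)}$), and the smallness condition on $\delta$ arises for the same reason you give.

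There is, however, a structural oversimplification that would become a genuine gap if you tried to fill in the details. You model the scattering data as a single function $h_j(\lambda)$. At positive energy this is not the case: the data split into \emph{two} pieces, the $\bar\partial$-coefficient $r(\lambda)$ for $|\lambda|\neq 1$ and the non-local jump kernel $\rho(\lambda,\lambda')$ on the torus $|\lambda|=|\lambda'|=1$ (equations \eqref{dbar}--\eqref{defrhoz}). These have \emph{different} stability behaviour with respect to $\Phi$: the map $\Phi\to\rho$ is Lipschitz (Proposition~\ref{proprho}), whereas $\Phi\to r$ is only logarithmically stable (Proposition~\ref{propestr}), because the exponential growth $e^{c'\rho}$ you invoke is absent on the unit circle. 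The reconstruction formula for $\mu_{-1}$ (equation \eqref{mu-1}) accordingly involves both pieces through the auxiliary objects $e,\Omega_1,\Omega_2,K$, and the ``stability of the non-local Riemann--Hilbert problem'' you defer to is precisely the content of Propositions~\ref{propK} and~\ref{propKK}, which control $K_2-K_1$ and $\nabla K_2-\nabla K_1$ in terms of \emph{both} $\|r_2-r_1\|$ and $\|\rho_2-\rho_1\|$. Your schematic bound $\|v_2-v_1\|\le C(E\,e^{c'\rho}\delta+(\sqrt E+\rho)^{-(m-2)})$ is the correct endpoint, but getting there requires separating $A_2-A_1$, $B_2-B_1$, $C_2-C_1$ as in Section~\ref{secpf}, and in particular handling the pole singularities of $X^{+,j}_k$ via the Vekua-type representation formulas \eqref{reprX+1}--\eqref{reprX+2}; a single-function model of the data does not see any of this.

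A minor correction: the tail decay of the scattering data is $|b(\lambda)|\lesssim(1+E(|\lambda|\pm1/|\lambda|)^2)^{-m/2}$ (Lemma~\ref{lemb}), i.e.\ exponent $-m$, not $-(m-2)$; the loss of two powers occurs only after integrating in $\lambda$ in the reconstruction (Lemma~\ref{lemestrr}). And the prefactor $E$ in the H\"older term does not come from a surface measure but from the $2i\sqrt E$ in \eqref{vmu1} combined with a further $\sqrt E$ arising from $\partial_z$ hitting the phase in $r(z,\lambda)$ and $\rho(\lambda,\lambda',z)$.
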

This results yields the following corollary.
\begin{cor}
Under the same assumptions, there exists a constant $C_2=C_2(E,D,N,m)$ such that
\begin{align} \label{est1}
\|v_2-v_1\|_{L^{\infty}(D)}&\leq C_2(\log(3+\delta^{-1}))^{-\alpha}, \qquad \alpha = m-2,
\end{align}
for $\delta < \tilde \delta$.
\end{cor}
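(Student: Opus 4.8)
The plan is to derive the corollary from the main theorem by optimizing the free parameter $\tau$ in the estimate \eqref{est2}. The key observation is that \eqref{est2} holds for every $0<\tau\leq 1$ (with $\delta$ small enough depending on $\tau$), so I am free to choose $\tau$ as a function of $\delta$ in order to balance the two competing terms on the right-hand side.

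First I would isolate the structure of \eqref{est2}: the first term ${E}\delta^{\tau}$ decreases as $\tau$ increases toward $1$, while the second term $\left(\sqrt{E}+(1-\tau)\log(3+\delta^{-1})\right)^{-(m-2)}$ \emph{increases} as $\tau$ increases (since the bracket shrinks). The natural choice is to pick $\tau=\tau(\delta)$ slightly below $1$ so that the Hölder term $E\delta^{\tau}$ is driven below logarithmic order while the factor $(1-\tau)\log(3+\delta^{-1})$ is kept bounded away from zero. Concretely, I would set $1-\tau$ proportional to $\bigl(\log(3+\delta^{-1})\bigr)^{-1}$ up to a logarithmic correction — for instance choosing $\tau$ so that $(1-\tau)\log(3+\delta^{-1})$ stays comparable to a fixed fraction of $\log(3+\delta^{-1})$ is too crude, so instead I want $\tau\to 1$ just fast enough that $\delta^{\tau}$ still beats any inverse power of the logarithm while $(1-\tau)\log(3+\delta^{-1})$ grows like $\log(3+\delta^{-1})$ itself.

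A clean way to arrange this is to take, say, $\tau = 1 - \bigl(\log(3+\delta^{-1})\bigr)^{-1/2}$, or more simply to choose $\tau$ bounded below by a fixed constant (e.g. $\tau=1/2$) while letting the logarithm do the work. With a fixed $\tau=1/2$ one has $(1-\tau)\log(3+\delta^{-1})=\tfrac12\log(3+\delta^{-1})$, so the second term in \eqref{est2} is bounded by $C\bigl(\log(3+\delta^{-1})\bigr)^{-(m-2)}$, which is exactly the target order in \eqref{est1}; meanwhile the first term $E\delta^{1/2}$ is bounded by $C\bigl(\log(3+\delta^{-1})\bigr)^{-(m-2)}$ as well, since any positive power of $\delta$ decays faster than any inverse power of $\log(3+\delta^{-1})$ as $\delta\to 0$. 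Absorbing $E$ and the constant $\tfrac12$ into a new constant $C_2=C_2(E,D,N,m)$ — and it is precisely here that the dependence on $E$ enters the corollary's constant, whereas $C_1$ in the theorem was $E$-independent — then gives \eqref{est1} for $\delta$ smaller than $\tilde\delta=\tilde\delta(1/2)$.

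The only real obstacle is verifying the elementary but essential inequality that $\delta^{1/2}\leq C\bigl(\log(3+\delta^{-1})\bigr)^{-(m-2)}$ for all sufficiently small $\delta$, which holds because the logarithm grows slower than any power; this fixes the smallness threshold $\tilde\delta$ and hence the constant $C_2$. I would remark that one could instead keep $\tau$ tending to $1$ and recover a slightly sharper constant, but the simple fixed choice already yields the stated order $\alpha=m-2$, so no further optimization is needed for the corollary.
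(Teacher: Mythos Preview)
Your final argument is correct and is exactly the intended derivation: the paper gives no separate proof of the corollary, treating it as immediate from Theorem~\ref{maintheo}, and fixing any $\tau\in(0,1)$ (your choice $\tau=1/2$ is fine) makes both terms of \eqref{est2} bounded by $C_2(\log(3+\delta^{-1}))^{-(m-2)}$, with $C_2$ absorbing the $E$-dependence. The preliminary discussion about letting $\tau\to 1$ as a function of $\delta$ is unnecessary and slightly self-contradictory (if $(1-\tau)\log(3+\delta^{-1})$ is to grow like $\log(3+\delta^{-1})$ then $\tau$ cannot tend to $1$), but since you abandon it for the simple fixed-$\tau$ argument no harm is done.
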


The novelty of estimate \eqref{est1}, with respect to \cite{NS}, is that, as $m \to +\infty$, we have $\alpha \to +\infty$. Moreover, under the assumption of Theorem \ref{maintheo}, according to instability estimates of Mandache \cite{M} and Isaev \cite{I}, our result is almost sharp. To be more precise, it was proved that stability estimate \eqref{est1} cannot hold for $\alpha > 2m$ for $C^m$ real-valued potentials and $\alpha > m$ for $C^m$ complex-valued potentials. Note that stability estimates and instability counterexamples are proved in different function spaces. In particular, by Sobolev embedding, we have only that $W^{m,1+\varepsilon}(D) \subset C^{m-2}(D)$ for any $\varepsilon > 0$. From this and the fact that the same stability holds in the linearized case (Born approximation, see \cite{NN}), we believe that our result is in fact sharp. Unfortunately we could not find yet an explicit counterexample in the $W^{m,1}$ class. 
Our estimates are still valid for complex-valued potentials, if $E$ is sufficiently large with respect to $\|v\|_{C(\bar D)}$: in this case we can't use the formulas at the beginning of Section \ref{sechil} for the solution of the Riemann-Hilbert problem and thus it is necessary to follow a more general approach, like in \cite[\S 6]{N3}.

Estimate \eqref{est1} also extends the result obtained in \cite{S1} for the same problem at zero energy and in \cite{S2} at negative energy. In dimension $d \geq 3$ a global stability estimate similar to \eqref{est1} was proved in \cite{N2}, at zero energy.\smallskip

As regards \eqref{est2}, its main feature is the explicit dependence on the energy $E$. This estimate consist of two parts, the first logarithmic and the second H\"older; when $E$ increases, the logarithmic part decreases and the H\"older part becomes dominant. This estimate is sharp not only with respect to the dependence on the smoothness of the potentials, but also with respect to the energy, as shown in \cite{I2}. It extends the result of \cite{S2}, where a similar energy-dependent stability estimate was obtained at negative energy. Yet in that case the H\"older part grows exponentially with the energy, while in the present work it grows linearly. For this reason estimate \eqref{est2}, namely when $\tau = 1$, is totally coherent with the Lipschitz stable approximate reconstruction algorithms developed in \cite{N4} and \cite{NS2}.

Estimate \eqref{est2} is the first stability result in two dimensions for the Gel'fand-Calder\'on problem at positive energy with an explicit dependence on the smoothness of potential and on the energy. In dimension $d \geq 3$, global energy-dependent stability estimates changing from logarithmic type to Lipschitz type at high energies were given in \cite{Isa} and greatly improved in \cite{IN2}. In turn, the paper \cite{Isa} was preceeded by \cite{N7}. See also \cite{I3} for similar estimates for another inverse boundary value problem.\smallskip

The proof of Theorem \ref{maintheo} follows the scheme of \cite{S2} and it is based on $\bar \partial$ techniques.
The map $\Phi(E) \to v(x)$ is considered as the composition of $\Phi(E) \to (r(\lambda) , \rho(\lambda,\lambda'))$ and $(r(\lambda), \rho(\lambda,\lambda')) \to v(x)$, where $r(\lambda)$ and $\rho(\lambda,\lambda')$ are complex valued functions, closely related to the so-called generalised scattering amplitude (see Section \ref{sec2} for details).

The stability of $\Phi(E) \to (r(\lambda),\rho(\lambda,\lambda'))$ -- previously known only for $E \leq 0$ -- relies on some identities of \cite{N5} (based in particular on \cite{A}), and estimates on $r(\lambda)$ for $\lambda$ near $0$ and $\infty$. The reconstrution of $r(\lambda)$ from $\Phi(E)$ is logarithmic stable with respect to $\Phi$ (at fixed $E$), while the reconstruction of $\rho(\lambda,\lambda')$ is Lipschitz stable. These results are proved in section \ref{secphir}.

The stability of $(r(\lambda),\rho(\lambda,\lambda')) \to v(x)$ is of H\"older type and it is proved in section \ref{sechil}. This is the most challenging part of the paper because we need to establish several new estimates for the non-local Riemann-Hilbert problem solved by $r(\lambda)$ and $\rho(\lambda,\lambda')$ (see Section \ref{sec2} for details). We make great use of the theory of generalised analytic functions of Ahlfors-Vekua and the main reference is \cite{V}. In particular, we establish pointwise and $L^p$ estimates for solutions of non-homogeneous $\bar \partial$-equations with pole singularities.

In Section \ref{secpf} we show how the composition of the two above-mentioned maps gives the result of Theorem \ref{maintheo}.

\section{Preliminaries} \label{sec2}
We recall the definition of the Faddeev eigenfunctions $\psi(x,k)$ of equation \eqref{equa}, for $x=(x_1,x_2) \in \R^2$, $k=(k_1,k_2) \in \SE \subset \C^2$, $\SE = \{ k \in \C^2 : k^2 = k_1^2 + k_2^2 = E\}$ for $E \neq 0$ (see \cite{F}, \cite{N3}). We first extend $v \equiv 0$ on $\R^2 \setminus D$ and define $\psi(x,k)$ as the solution of the following integral equation:
\begin{align} \label{inteq}
\psi(x,k) &= e^{ikx}+\int_{y \in \R^2} G(x-y, k)v(y)\psi(y,k) dy,\\ \label{greenfad}
G(x,k)&=g(x,k)e^{ikx}, \\ \label{defgg}
g(x,k)&=-\left(\frac{1}{2 \pi}\right)^2 \int_{\xi \in \R^2} \frac{e^{i\xi x}}{\xi^2+2k\xi}d\xi,
\end{align}
where $x \in \R^2$, $k\in \SE \setminus \R^2$. It is convenient to write \eqref{inteq} in the following form
\begin{equation} \label{inteq2}
\mu(x,k) = 1 + \int_{y \in \R^2} g(x-y,k)v(y)\mu(y,k)dy,
\end{equation}
where $\mu(x,k)e^{ikx} = \psi(x,k)$.

For $\Imm k =0$ formulas \eqref{inteq}-\eqref{inteq2} make no sense; however, the following limits make sense
\begin{gather} \label{defpsig}
\psi_{\gamma}(x,k)=\psi(x,k+i0\gamma), \qquad G_{\gamma}(x,k)= G(x,k+i0\gamma),\\
\mu_{\gamma}(x,k)=\mu(x,k+i0\gamma),
\end{gather}

We define $\E_E \subset \SE \setminus \R^2$ the set of exceptional points of integral equation \eqref{inteq2}: $k \in \SE \setminus (\E_E \cup \R^2)$ if and only if equation \eqref{inteq2} is uniquely solvable in $L^{\infty}(\R^2)$.

\begin{rem} \label{reme}
From \cite[Proposition 1.1]{N4} we have that there exists $E_0 = E_0(\|v\|_{m,1},D)$ such that for $|E| \geq E_0(\|v\|_{m,1},D)$ there are no exceptional points for equation \eqref{inteq2}, i.e. $\E_E = \emptyset$: thus the Faddeev eigenfunctions exist (unique) for all $k \in \SE \setminus \R^2$.
\end{rem}

Following \cite{GM}, \cite{N3}, we make the change of variables
\begin{gather} \nonumber
z = x_1 + i x_2, \qquad \lambda = \frac{k_1+ik_2}{\sqrt{E}}, \\ \nonumber
k_1 = \left(\lambda + \frac{1}{\lambda}\right)\frac{\sqrt{E}}{2}, \qquad k_2 = \left(\frac{1}{\lambda}- \lambda\right)\frac{i\sqrt{E}}{2},
\end{gather}
and write $\psi, \mu$ as functions of these new variables. For $|\lambda|=1$ and $E>0$ formulas \eqref{defgg} and \eqref{inteq2} make no sense but the following limits do:
\begin{gather}
 \psi_{\pm}(z,\lambda) =\psi(z,\lambda(1\mp 0)), \qquad \mu_{\pm}(z,\lambda) = \psi(z,\lambda(1\mp 0)), \\ \label{defgpm}
g_{\pm}(z,\lambda)=g(z,\lambda(1\mp 0)).
\end{gather}

For $k \in \SE \setminus (\E_E \cup \R^2)$ we can define, for the corresponding $\lambda$, the following generalised scattering amplitude,
\begin{align} \label{defb}
b(\lambda,E) &= \frac{1}{(2 \pi)^2} \! \! \int_{\C} \exp \bigg[\frac i 2 \sqrt{E}\left(1+(\mathrm{sgn}\,E)\frac{1}{\lambda \bar \lambda} \right)\\ \nonumber
&\quad \times \left( (\mathrm{sgn}\,E)z\bar \lambda + \lambda \bar z\right)\bigg]v(z)\mu(z,\lambda)d\Ree z\,d\Imm z,
\end{align}
and the functions $h_{\pm}$,
\begin{align} \label{defhpm}
h_{\pm}(\lambda,\lambda',E)&=\left(\frac{1}{2 \pi}\right)^2\int_{\C}\exp\left[-\frac{i}{2}\sqrt{E}(\lambda'\bar z+z/\lambda')\right]\\ \nonumber
&\quad \times v(z)\psi_{\pm}(z,\lambda)d\Ree z \, d\Imm z,
\end{align}
for $|\lambda|=|\lambda'|=1$. It is useful to introduce the following auxiliary functions $h_1, h_2$,
\begin{align} \label{defh1}
h_1(\lambda,\lambda')&=\theta\left[-\frac{1}{i}\left(\frac{\lambda'}{\lambda}-\frac{\lambda}{\lambda'}\right) \right]h_+(\lambda,\lambda')\\ \nonumber
&\quad -\theta\left[\frac{1}{i}\left(\frac{\lambda'}{\lambda}-\frac{\lambda}{\lambda'}\right) \right]h_-(\lambda,\lambda'),\\ \label{defh2}
h_2(\lambda,\lambda')&=\theta\left[-\frac{1}{i}\left(\frac{\lambda'}{\lambda}-\frac{\lambda}{\lambda'}\right) \right]h_-(\lambda,\lambda')\\ \nonumber
&\quad -\theta\left[\frac{1}{i}\left(\frac{\lambda'}{\lambda}-\frac{\lambda}{\lambda'}\right) \right]h_+(\lambda,\lambda'),
\end{align}
and $\rho$, solution of the following integral equations,
\begin{subequations}\label{defrho}
\begin{align} \label{defrho1}
&\rho(\lambda,\lambda')+\pi i\int_{|\lambda''|=1}\rho(\lambda,\lambda'')\theta\left[\frac{1}{i}\left(\frac{\lambda'}{\lambda''}-\frac{\lambda''}{\lambda'}\right) \right]\\ \nonumber
&\qquad \times h_1(\lambda'',\lambda')|d\lambda''|=-\pi ih_1(\lambda,\lambda'),\\
&\rho(\lambda,\lambda')+\pi i\int_{|\lambda''|=1}\rho(\lambda,\lambda'')\theta\left[-\frac{1}{i}\left(\frac{\lambda'}{\lambda''}-\frac{\lambda''}{\lambda'}\right) \right]\\ \nonumber
&\qquad \times h_2(\lambda'',\lambda')|d\lambda''|=-\pi ih_2(\lambda,\lambda'),
\end{align}
\end{subequations}
for $|\lambda|=|\lambda'|=1$. Here and in the following we drop the dependence of some functions on $E$ for simplicity's sake.

The functions just defined play an important role in the following Riemann-Hilbert problem solved by $\mu$. When $v$ is real-valued and $E>0$ we have (see \cite{N3} for more details):
\begin{equation}\label{dbar}
\frac{\partial}{\partial \bar \lambda}\mu(z,\lambda) = r(z,\lambda)\overline{\mu(z,\lambda)},
\end{equation}
for $\lambda$ not an exceptional point (i.e. $k(\lambda) \in \SE \setminus (\E_E \cup \R^2)$) and $|\lambda| \neq 1$, where
\begin{align} \label{defr2}
r(z,\lambda) &= r(\lambda)\exp \bigg[-\frac i 2 \sqrt{E}\left(1+(\mathrm{sgn}\,E)\frac{1}{\lambda \bar \lambda} \right) \left( z\bar \lambda + \lambda \bar z\right)\bigg],\\  \label{defr}
r(\lambda) &= \frac{\pi}{\bar \lambda}\mathrm{sgn}(\lambda \bar \lambda -1) b(\lambda,E),
\end{align}
where $b$ is defined in \eqref{defb};
\begin{align}\label{corrt}
\mu_+(z,\lambda) = \mu_-(z,\lambda)+\int_{|\lambda'|=1}\rho(\lambda,\lambda',z)\mu_-(z,\lambda')|d\lambda'|,
\end{align}
for $|\lambda|=1$, where
\begin{equation}\label{defrhoz}
\rho(\lambda,\lambda',z)= \rho(\lambda,\lambda')\exp\left[\frac{i\sqrt{E}}{2}\left((\lambda'-\lambda)\bar z+\left(\frac{1}{\lambda'}-\frac{1}{\lambda}\right)z\right)\right],
\end{equation}
where $\rho(\lambda,\lambda')$ is defined in \eqref{defrho}. In addition we have
\begin{gather}
\lim_{|\lambda|\to \infty} \mu(z,\lambda)=1,\\
\mu(z,\lambda)=1+\mu_{-1}(z)\lambda^{-1}+o(|\lambda|^{-1}),\quad \text{for } |\lambda| \to \infty,\\ \label{vmu1}
v(z)=2i\sqrt{E}\frac{\partial}{\partial z}\mu_{-1}(z).
\end{gather}

We recall that if $v \in W^{m,1}(\R^2)$ with $\supp v \subset D$, then $\|\hat{v} \|_{\alpha,m} < +\infty$ for some $0 < \alpha<1$, where
\begin{gather}
\hat v(p) = (2 \pi)^{-2} \int_{\R^2} e^{ipx} v(x) dx, \qquad p \in \C^2, \\
\|u\|_{\alpha, m} = \| (1+|p|^2)^{m/2} u(p)\|_{\alpha}, \\
\|w\|_{\alpha} = \sup_{p,\xi \in \R^2, |\xi|\leq 1} \left(|w(p)|+|\xi|^{-\alpha}|w(p+\xi)-w(p)|\right),
\end{gather}
for test functions $u,w$.

We restate in an adapted form a lemma from \cite[Lemma 2.1]{N4}.
\begin{lem} \label{lem21}
Let the conditions \eqref{cv1}, \eqref{cv2} hold for a potentials $v$. Let $\mu(x,k)$ be the associated Faddeev functions. Then, for any $0<\sigma<1$, we have
\begin{align}
|\mu(x,k)-1|+\left|\frac{\partial \mu(x,k)}{\partial x_1}\right|+\left|\frac{\partial \mu(x,k)}{\partial x_2}\right| \leq |\Ree\, k|^{-\sigma}c(m,\sigma) \|\hat{v}\|_{\alpha,m}, 
\end{align} 
for $k \in \C^2\setminus \R^2$,
\begin{align}
|\mu_{\gamma}(x,k)-1|+\left|\frac{\partial \mu_{\gamma}(x,k)}{\partial x_1}\right|+\left|\frac{\partial \mu_{\gamma}(x,k)}{\partial x_2}\right| \leq |k|^{-\sigma}c(m,\sigma) \|\hat{v}\|_{\alpha,m}, 
\end{align} 
for $k \in \R^2$, $\gamma \in S^1$. In both cases we suppose also that $k^2 \geq R$, where $R$ is defined in Lemma \ref{lemb}.
\end{lem}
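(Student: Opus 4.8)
The plan is to solve the integral equation \eqref{inteq2} by a contraction argument carried out in Fourier space, where the natural norm is precisely $\|\cdot\|_{\alpha,m}$; this follows the method of the cited \cite{N4}. Writing $\mathcal F$ for the Fourier transform in $x$ and setting $u = \mathcal F\bigl(\mu(\cdot,k)-1\bigr)$ and $w = \mathcal F\bigl(v\mu(\cdot,k)\bigr)$, the convolution-and-product structure of \eqref{inteq2} turns it, up to the fixed normalisation constants of \eqref{defgg}, into
\begin{equation} \nonumber
w = \widehat v + \widehat v \ast \bigl(\widehat g(\cdot,k)\,w\bigr), \qquad \widehat g(p,k) = -\frac{1}{p^2 + 2kp},
\end{equation}
where $\widehat g(\cdot,k)$ is the Fourier symbol of the Faddeev Green's function read off from \eqref{defgg}. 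First I would record the identity $p^2 + 2kp = (p+k)^2 - k^2$ and note that, since $k^2 \geq R$ with $R$ large, $\widehat g(\cdot,k)$ decays like $|p|^{-2}$ away from the zero set of its denominator. The hypothesis $m>2$ of \eqref{cv1} both guarantees $\|\widehat v\|_{\alpha,m}<+\infty$ and, together with this decay, makes the weighted $p$-integrals below converge.

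The core of the argument is to bound the operator $B\colon w \mapsto \widehat v \ast \bigl(\widehat g(\cdot,k)\,w\bigr)$ on the space with norm $\|\cdot\|_{\alpha,m}$. I would prove an estimate of the form
\begin{equation} \nonumber
\|Bw\|_{\alpha,m} \leq c(m,\sigma)\,|\Ree\,k|^{-\sigma}\,\|\widehat v\|_{\alpha,m}\,\|w\|_{\alpha,m}, \qquad 0 < \sigma < 1,
\end{equation}
for $k \in \C^2\setminus\R^2$, together with the analogous bound carrying $|k|^{-\sigma}$ for the boundary value $\widehat g_\gamma$ when $k \in \R^2$. This is the main obstacle. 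After using that $\|\cdot\|_{\alpha,m}$ controls the sup part $|\widehat v(p)|\leq\|\widehat v\|_{\alpha,m}(1+|p|^2)^{-m/2}$ and an $\alpha$-Hölder modulus, it reduces to controlling weighted singular integrals of $\widehat g(\cdot,k)$ and their Hölder increments, split into the region $|q|\geq|p|/2$ and $|q|<|p|/2$ so that in each the product of the two weights dominates. The gain in $k$ comes from the behaviour of $\widehat g$ near the zero set of its denominator: since $p$ is real, $p^2+2kp$ vanishes only where $p^2+2(\Ree\,k)\cdot p=0$ and $(\Imm\,k)\cdot p=0$, a set of codimension two, and transversally to it $|\widehat g(p,k)|$ behaves like $\bigl(|\Ree\,k|\,\mathrm{dist}\bigr)^{-1}$. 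Decomposing into dyadic shells around this set and trading the singularity against the $\alpha$-Hölder regularity of the weight times $\widehat v$ yields the factor $|\Ree\,k|^{-\sigma}$; the restriction $\sigma<1$ is exactly the integrability threshold of this procedure in two dimensions.

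Granting this, for $k^2 \geq R$ with $R$ (as in Lemma \ref{lemb}) so large that $c(m,\sigma)|\Ree\,k|^{-\sigma}\|\widehat v\|_{\alpha,m}\leq 1/2$, the operator $I-B$ is invertible by Neumann series (which is the analytic counterpart of the absence of exceptional points noted in Remark \ref{reme}), and $w=(I-B)^{-1}\widehat v$ satisfies $\|w\|_{\alpha,m}\leq 2\|\widehat v\|_{\alpha,m}$. I would then recover $\mu$ by inverting the Fourier transform: since $u=\widehat g(\cdot,k)\,w$,
\begin{equation} \nonumber
\mu(x,k)-1 = \mathcal F^{-1}\bigl(\widehat g(\cdot,k)\,w\bigr)(x), \qquad \frac{\partial \mu(x,k)}{\partial x_j} = \mathcal F^{-1}\bigl(-i p_j\,\widehat g(\cdot,k)\,w\bigr)(x).
\end{equation}
Passing the absolute value inside the integrals gives
\begin{equation} \nonumber
|\mu(x,k)-1| + \sum_{j=1,2}\Bigl|\frac{\partial \mu(x,k)}{\partial x_j}\Bigr| \leq c\int_{\R^2}(1+|p|)\,|\widehat g(p,k)|\,|w(p)|\,dp,
\end{equation}
and the right-hand side is estimated exactly by the weighted singular-integral bound of the previous step, the factor $1+|p|$ being absorbed by the $|p|^{-2}$ decay of $\widehat g$, producing $|\Ree\,k|^{-\sigma}c(m,\sigma)\|\widehat v\|_{\alpha,m}$. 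The second inequality of the statement, for $k\in\R^2$ and $\gamma\in S^1$, follows in the same way after replacing $\widehat g$ by the boundary value $\widehat g_\gamma$ from the $i0\gamma$-prescription implicit in \eqref{defpsig}, whose displaced pole regularises the real-$k$ singularity and for which the same weighted estimates hold with $|k|$ in place of $|\Ree\,k|$.
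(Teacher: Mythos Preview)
The paper does not give its own proof of this lemma: it is explicitly stated as a restatement (``in an adapted form'') of \cite[Lemma~2.1]{N4}, and no argument is supplied beyond that citation. So there is no proof in the paper to compare against; your sketch is effectively an attempt to reconstruct the argument of \cite{N4}.

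Your outline follows the correct strategy of \cite{N4}: pass to Fourier space, rewrite \eqref{inteq2} as $w=\widehat v+Bw$ with $Bw=\widehat v\ast(\widehat g(\cdot,k)w)$, show $B$ is a contraction on the space with norm $\|\cdot\|_{\alpha,m}$ once $|\Ree\,k|$ is large, and read off the pointwise bounds on $\mu-1$ and $\nabla_x\mu$ from $u=\widehat g(\cdot,k)w$. The identification of the singular set of $\widehat g(\cdot,k)$ as codimension two (for $k\in\Sigma_E$, $E>0$, it is the two points $p=0$ and $p=-2\,\Ree\,k$) and the observation that $|\Ree\,k|\geq\sqrt E$ on $\Sigma_E$ so that the smallness condition follows from $k^2\geq R$ are both right.

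The one place your write-up is thin is exactly the ``main obstacle'' you flag: the operator bound $\|Bw\|_{\alpha,m}\leq c(m,\sigma)|\Ree\,k|^{-\sigma}\|\widehat v\|_{\alpha,m}\|w\|_{\alpha,m}$. You describe a dyadic decomposition near the zero set and a trade of the $1/(|\Ree\,k|\,\mathrm{dist})$ singularity against the $\alpha$-H\"older modulus, but you do not actually carry it out, and in particular you do not verify the H\"older half of the $\|\cdot\|_{\alpha,m}$ norm after convolution (i.e.\ the bound on $|\xi|^{-\alpha}|Bw(p+\xi)-Bw(p)|$), which in \cite{N4} requires its own splitting and is where the parameter $\alpha$ genuinely enters. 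As a sketch of the method this is fine, but if you intend it as a self-contained proof you should fill in that estimate; otherwise the citation to \cite{N4} that the paper gives is already the content.
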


The following lemma is a variation of a result in \cite{N4} and it is proved in \cite{S2}.
\begin{lem} \label{lemb}
Let the conditions \eqref{cv1}, \eqref{cv2} hold for a potentials $v$ and let $E \in \R \setminus \{0\}$. Then there exists an $R = R(m,\|\hat{v} \|_{\alpha,m}) > 1$, such  that
\begin{equation}
|b(\lambda,E)| \leq 2 \|\hat{v}\|_{\alpha,m} \left( 1+|E|\left(|\lambda|+\mathrm{sgn}(E)/|\lambda| \right)^2 \right)^{-m/2},
\end{equation}
for $|\lambda| > \frac{2R}{|E|^{1/2}}$ and $|\lambda|<\frac{|E|^{1/2}}{2R}$
\end{lem}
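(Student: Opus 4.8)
The plan is to read the defining integral \eqref{defb} as a Fourier transform and to control it in the weighted Hölder norm $\|\cdot\|_{\alpha,m}$ by a fixed-point argument. Writing $z=x_1+ix_2$ and expanding the phase in \eqref{defb} as $i\,p\cdot x$, a direct computation identifies the momentum $p=p(\lambda,E)=2\,\Ree\,k(\lambda)$ (real for $E>0$; for $E<0$ one uses the holomorphic extension of $\hat v$, the algebra being identical) and gives
\[
|p|^{2}=|E|\big(|\lambda|+\mathrm{sgn}(E)/|\lambda|\big)^{2}.
\]
Hence $b(\lambda,E)=\widehat{v\mu}(p)$ with $\mu=\mu(\cdot,\lambda)$, and the weight in the claimed estimate is exactly $(1+|p|^{2})^{-m/2}$; this already explains the shape of the bound, and reduces the lemma to showing $\|\widehat{v\mu}\|_{\alpha,m}\le 2\|\hat v\|_{\alpha,m}$ for $\lambda$ outside the annulus $\sqrt{|E|}/(2R)\le|\lambda|\le 2R/\sqrt{|E|}$.

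The naive route — bounding $b-\hat v(p)=\widehat{v(\mu-1)}(p)$ by $(2\pi)^{-2}\|v\|_{L^{1}}\sup_x|\mu(x,k)-1|$ and invoking Lemma \ref{lem21} — only produces a factor $|\Ree\,k|^{-\sigma}$ with $\sigma<1$, which cannot match the $|p|^{-m}$ decay needed when $|\lambda|$ is large. I would instead feed in the integral equation \eqref{inteq2}, namely $\mu-1=g(\cdot,k)\ast(v\mu)$, and pass to Fourier variables. Since the symbol of the Faddeev Green function \eqref{defgg} is $-(q^{2}+2k\cdot q)^{-1}$, this turns the relation into the linear equation $\widehat{v\mu}=\hat v+L_{k}\widehat{v\mu}$, where, up to a normalising constant, $L_{k}w=\hat v\ast\big(-(q^{2}+2k\cdot q)^{-1}\,w\big)$.

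The core of the proof is then to show that $L_{k}$ is a contraction of constant at most $\tfrac12$ on the space normed by $\|\cdot\|_{\alpha,m}$, uniformly for $|k|$ large, i.e. for $|\lambda|>2R/\sqrt{|E|}$ or $|\lambda|<\sqrt{|E|}/(2R)$. This uses the polynomial off-diagonal decay of $\hat v(p-q)$ encoded in $\|\hat v\|_{\alpha,m}$, the smallness of the multiplier $(q^{2}+2k\cdot q)^{-1}$ for $|k|$ large away from its singular set, and Lemma \ref{lem21} to keep $\widehat{v\mu}$ in a fixed ball of the weighted space. Once $\|L_{k}\|\le\tfrac12$ is established, the fixed point obeys $\|\widehat{v\mu}\|_{\alpha,m}\le\|\hat v\|_{\alpha,m}/(1-\tfrac12)=2\|\hat v\|_{\alpha,m}$, and evaluating at the particular $p$ above yields $|b(\lambda,E)|\le 2\|\hat v\|_{\alpha,m}(1+|p|^{2})^{-m/2}$, which is the assertion; the factor $2$ is precisely $1/(1-\tfrac12)$.

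The main obstacle is exactly this contraction estimate in the \emph{Hölder} norm rather than merely a sup norm: one must simultaneously control the order-$m$ polynomial decay and the $\alpha$-seminorm of the convolution $L_{k}w$, and the delicate point is the behaviour near the singularity of the multiplier $(q^{2}+2k\cdot q)^{-1}$. Balancing these is what forces the threshold $R$ to depend on both $m$ and $\|\hat v\|_{\alpha,m}$, and is the reason the estimate is stated only outside the $\lambda$-annulus determined by $R$.
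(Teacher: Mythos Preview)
The paper does not give its own proof of this lemma: it simply states that the result ``is a variation of a result in \cite{N4} and it is proved in \cite{S2}.'' Your sketch is essentially the argument used in those references. One passes to Fourier variables, observes that $b(\lambda,E)=\widehat{v\mu}(p)$ with $|p|^2=|E|\bigl(|\lambda|+\mathrm{sgn}(E)/|\lambda|\bigr)^2$, rewrites the Lippmann--Schwinger equation as $\widehat{v\mu}=\hat v+L_k\widehat{v\mu}$ with $L_k w=\hat v\ast\bigl(-(q^2+2k\cdot q)^{-1}w\bigr)$, and proves that $L_k$ has operator norm at most $\tfrac12$ on the weighted H\"older space $\|\cdot\|_{\alpha,m}$ once $|\Ree\,k|$ is large enough. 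The Neumann series then gives $\|\widehat{v\mu}\|_{\alpha,m}\le 2\|\hat v\|_{\alpha,m}$, which is exactly the claimed bound evaluated at $p$.

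One small comment: invoking Lemma~\ref{lem21} ``to keep $\widehat{v\mu}$ in a fixed ball'' is superfluous and slightly misleading. The contraction mapping theorem needs no a priori bound on the fixed point; once $\|L_k\|\le\tfrac12$ is established, the bound $\|\widehat{v\mu}\|_{\alpha,m}\le 2\|\hat v\|_{\alpha,m}$ follows automatically. In fact Lemma~\ref{lem21} as stated in this paper already \emph{refers} to the constant $R$ defined in the present lemma, so using it here would be circular. The genuine work, as you correctly identify, is the contraction estimate for $L_k$ in the H\"older norm, and that is carried out in \cite{N4}.
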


Let us mention that Lemma 2.2 of \cite{S1} and Lemma 2.1 of \cite{S2} should be corrected using the norm $\| \cdot \|_{\alpha,m}$ instead of $\|\cdot \|_{m}$.

We also restate \cite[Lemma 2.6]{BBR}. 
\begin{lem}[\cite{BBR}] \label{lemtech}
Let $q_1\in L^{s_1}(\C) \cap L^{s_2}(\C)$, $1 < s_1 <2 < s_2 < \infty$ and $q_2 \in L^s( \C)$, $1 < s <2$. Assume $u$ is a function in $L^{\tilde s}(\C)$, with $1/\tilde s = 1/s - 1/2$, which satisfies
\begin{equation}
\frac{\partial u (\lambda)}{\partial \bar \lambda} = q_1(\lambda) \bar u(\lambda) + q_2(\lambda), \qquad \lambda \in \C.
\end{equation}
Then there exists $c=c(s,s_1,s_2) >0$ such that
\begin{equation}
\|u\|_{L^{\tilde s}} \leq c \|q_2\|_{L^s} \exp(c (\|q_1\|_{L^{s_1}}+\|q_1\|_{L^{s_2}})).
\end{equation}
\end{lem}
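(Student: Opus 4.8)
The plan is to reduce the conjugate-linear equation to a genuinely linear $\bar\partial$-equation and then apply the classical similarity principle of Ahlfors--Vekua theory. The key operator is the Cauchy transform
\[
(Pf)(\lambda) = -\frac{1}{\pi}\int_{\C}\frac{f(\zeta)}{\zeta-\lambda}\,d\Ree\zeta\,d\Imm\zeta,
\]
which satisfies $\partial u/\partial\bar\lambda = f$ for $u=Pf$ and, by the Hardy--Littlewood--Sobolev inequality (its kernel is the Riesz potential of order one in $\R^2$), is bounded $L^s(\C)\to L^{\tilde s}(\C)$ precisely when $1/\tilde s = 1/s - 1/2$ --- exactly the exponent relation in the hypothesis.

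First I would linearize the equation. Setting
\[
q(\lambda) = \begin{cases} q_1(\lambda)\,\overline{u(\lambda)}/u(\lambda), & u(\lambda)\neq 0,\\[2pt] 0, & u(\lambda)=0,\end{cases}
\]
one has $|q|\le|q_1|$ pointwise, so $q\in L^{s_1}(\C)\cap L^{s_2}(\C)$ with $\|q\|_{L^{s_j}}\le\|q_1\|_{L^{s_j}}$, and the equation becomes the linear equation $\partial u/\partial\bar\lambda = q\,u + q_2$. Next I set $\omega = Pq$, so that $\partial\omega/\partial\bar\lambda = q$, and consider $F = e^{-\omega}u$. A direct computation gives $\partial F/\partial\bar\lambda = e^{-\omega}(\partial u/\partial\bar\lambda - u\,\partial\omega/\partial\bar\lambda) = e^{-\omega}q_2$. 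Hence $F - P(e^{-\omega}q_2)$ is entire; since $u\in L^{\tilde s}$ and (as shown below) $e^{\pm\omega}\in L^\infty$, both $F$ and $P(e^{-\omega}q_2)$ lie in $L^{\tilde s}$ with $\tilde s<\infty$, so the entire function must vanish. This yields the representation $u = e^{\omega}P(e^{-\omega}q_2)$.

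The crux is the uniform bound $\|\omega\|_{L^\infty}\le c(\|q_1\|_{L^{s_1}}+\|q_1\|_{L^{s_2}})$, and this is exactly where the two-exponent assumption on $q_1$ is used. Splitting the Cauchy kernel over $\{|\zeta-\lambda|<1\}$ and $\{|\zeta-\lambda|\ge 1\}$ and applying H\"older on each piece, the near-diagonal part is controlled by $\|q\|_{L^{s_2}}$ because $|\zeta-\lambda|^{-1}$ lies in $L^{s_2'}$ over the unit disk (here $s_2>2$ forces $s_2'<2$, which is what makes $\int_{|\eta|<1}|\eta|^{-s_2'}\,d\Ree\eta\,d\Imm\eta$ finite), while the far part is controlled by $\|q\|_{L^{s_1}}$ because $|\zeta-\lambda|^{-1}$ lies in $L^{s_1'}$ over the complement of the unit disk (here $s_1<2$ forces $s_1'>2$, so that $\int_{|\eta|\ge1}|\eta|^{-s_1'}\,d\Ree\eta\,d\Imm\eta<\infty$). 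Consequently $\|e^{\pm\omega}\|_{L^\infty}\le\exp\bigl(c(\|q_1\|_{L^{s_1}}+\|q_1\|_{L^{s_2}})\bigr)$.

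Finally I would assemble the estimate: from $u = e^{\omega}P(e^{-\omega}q_2)$,
\[
\|u\|_{L^{\tilde s}} \le \|e^{\omega}\|_{L^\infty}\,\|P(e^{-\omega}q_2)\|_{L^{\tilde s}} \le c\,\|e^{\omega}\|_{L^\infty}\,\|e^{-\omega}q_2\|_{L^s} \le c\,\|e^{\omega}\|_{L^\infty}\|e^{-\omega}\|_{L^\infty}\,\|q_2\|_{L^s},
\]
using the $L^s\to L^{\tilde s}$ boundedness of $P$ and the fact that $e^{-\omega}q_2\in L^s$. Substituting the exponential bounds on $e^{\pm\omega}$ and absorbing the resulting factor into $c$ gives the claimed inequality. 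I expect the main obstacle to be making the $L^\infty$ estimate on $\omega$ fully rigorous --- in particular verifying the vanishing of the entire part, which needs the boundedness of $e^{\pm\omega}$ together with $\tilde s<\infty$, and keeping careful track that the splitting exponents $s_1<2<s_2$ are exactly what render both kernel integrals convergent.
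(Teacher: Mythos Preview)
Your argument is correct and is precisely the standard Vekua similarity-principle proof: linearize via $q=q_1\bar u/u$, set $\omega=Pq$, apply Liouville to the entire $L^{\tilde s}$ function $e^{-\omega}u-P(e^{-\omega}q_2)$, and combine the $L^\infty$ bound on $\omega$ (from the near/far splitting using $s_1<2<s_2$) with the Hardy--Littlewood--Sobolev bound $P:L^s\to L^{\tilde s}$. The paper does not actually supply a proof of this lemma; it is only restated from \cite[Lemma~2.6]{BBR}, and the argument there follows the same route you outline.
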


We will make also use of the well-known H\"older's inequality, which we recall in a special case: for $f \in L^p(\C)$, $g \in L^q(\C)$ such that $1 \leq p,q\leq \infty$, $1\leq r < \infty$, $1/p+1/q = 1/r$, we have
\begin{equation} \label{holder}
 \|fg\|_{L^r(\C)}\leq \|f\|_{L^p(\C)}\|g\|_{L^q(\C)}.
\end{equation}

Throughout all the paper $c(\alpha, \beta, \ldots)$ is a positive constant depending on parameters $\alpha, \beta, \ldots$

\section{From $\Phi(E)$ to $r(\lambda)$ and $\rho(\lambda,\lambda')$} \label{secphir}
We begin recalling a lemma from \cite{S2}, which we restate in the case $E >0$.
\begin{lem} \label{lemestrr}
Let the conditions \eqref{cv1}, \eqref{cv2} hold and take $0<a_1 \leq \min\left(1, \frac{|E|^{1/2}}{2R}\right)$, $a_2 \geq \max\left(1,\frac{2R}{|E|^{1/2}}\right)$,
for $E > 0$ and $R$ as defined in Lemma \ref{lemb}. Then for $p \geq 1$ we have
\begin{align} \label{estlem21}
\left\||\lambda|^j r(\lambda)\right\|_{L^p(|\lambda|<a_1)} &\leq c(p,m) \|\hat{v}\|_{\alpha,m} |E|^{-m/2} a_1^{m-1+j+2/p}, \\ \label{estlem22}
\left\||\lambda|^j r(\lambda)\right\|_{L^p(|\lambda|>a_2)} &\leq c(p,m) \|\hat{v}\|_{\alpha,m} |E|^{-m/2} a_2^{-m-1+j+2/p},
\end{align}
where $j=1,0,-1$ and $r$ was defined in \eqref{defr}.
\end{lem}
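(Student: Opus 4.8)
The goal is to bound weighted $L^p$ norms of $r(\lambda)$ near the origin and near infinity, and the natural strategy is to reduce everything to the pointwise decay of the generalised scattering amplitude $b(\lambda,E)$ provided by Lemma \ref{lemb}. Recall from \eqref{defr} that
\begin{equation} \nonumber
r(\lambda) = \frac{\pi}{\bar\lambda}\,\mathrm{sgn}(\lambda\bar\lambda-1)\,b(\lambda,E),
\end{equation}
so that $|\lambda|^j r(\lambda)$ has modulus exactly $\pi |\lambda|^{j-1}|b(\lambda,E)|$. The plan is therefore to insert the bound from Lemma \ref{lemb}, namely
\begin{equation} \nonumber
|b(\lambda,E)| \leq 2\|\hat v\|_{\alpha,m}\bigl(1+E(|\lambda|+\mathrm{sgn}(E)/|\lambda|)^2\bigr)^{-m/2},
\end{equation}
valid (for $E>0$) precisely on the two regions $|\lambda|<|E|^{1/2}/(2R)$ and $|\lambda|>2R/|E|^{1/2}$, which contain the sets $|\lambda|<a_1$ and $|\lambda|>a_2$ under the stated hypotheses on $a_1,a_2$. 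Then I would raise to the $p$-th power, integrate in polar coordinates over the appropriate annular region, and simplify.

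\textbf{The small-$\lambda$ estimate \eqref{estlem21}.} On $|\lambda|<a_1\le |E|^{1/2}/(2R)$ I would first extract the dominant behaviour of the weight. Since here $|\lambda|$ is small and $E$ large, $(|\lambda|+1/|\lambda|)^2 \ge |\lambda|^{-2}$, so $1+E(|\lambda|+1/|\lambda|)^2 \ge E|\lambda|^{-2}$, whence $\bigl(1+E(|\lambda|+1/|\lambda|)^2\bigr)^{-m/2} \le E^{-m/2}|\lambda|^{m}$. Combining with the factor $|\lambda|^{j-1}$ from $r$ gives the pointwise bound $|\lambda|^j|r(\lambda)| \le c(m)\|\hat v\|_{\alpha,m}E^{-m/2}|\lambda|^{m-1+j}$. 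Taking the $L^p$ norm over the disk $|\lambda|<a_1$ in polar coordinates reduces to $\bigl(\int_0^{a_1} t^{p(m-1+j)}\,t\,dt\bigr)^{1/p}$, which is finite (the exponent $p(m-1+j)+1>-1$ since $m>2$, $j\ge -1$) and evaluates to a constant times $a_1^{\,m-1+j+2/p}$. This yields \eqref{estlem21} with the claimed power of $a_1$ and of $E$.

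\textbf{The large-$\lambda$ estimate \eqref{estlem22}.} This is symmetric: on $|\lambda|>a_2\ge 2R/|E|^{1/2}$ now $|\lambda|$ is large, so $(|\lambda|+1/|\lambda|)^2\ge|\lambda|^2$ and the weight is bounded by $E^{-m/2}|\lambda|^{-m}$, giving $|\lambda|^j|r(\lambda)|\le c(m)\|\hat v\|_{\alpha,m}E^{-m/2}|\lambda|^{-m-1+j}$. The $L^p$ integral $\bigl(\int_{a_2}^\infty t^{p(-m-1+j)}\,t\,dt\bigr)^{1/p}$ converges at infinity because $p(-m-1+j)+1<-1$ (again using $m>2$, $j\le 1$, and $p\ge1$), and equals a constant times $a_2^{\,-m-1+j+2/p}$, which is \eqref{estlem22}. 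The convergence of the tail integral is where the regularity assumption $m>2$ is genuinely used, and checking that the integrability exponents stay in the admissible range uniformly for $j\in\{-1,0,1\}$ and all $p\ge1$ is the only point requiring care; the rest is a direct computation. Essentially the entire lemma is an exercise in substituting the pointwise estimate of Lemma \ref{lemb} and integrating, so I do not expect any serious obstacle beyond this bookkeeping — the substantive work was already done in establishing Lemma \ref{lemb}.
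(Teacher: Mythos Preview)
Your proposal is correct and is exactly the natural argument: plug the pointwise bound on $b(\lambda,E)$ from Lemma~\ref{lemb} into the definition \eqref{defr}, pass to polar coordinates, and integrate the resulting powers of $|\lambda|$, checking that $m>2$ guarantees convergence at both ends for all $j\in\{-1,0,1\}$ and $p\ge 1$. The paper does not give its own proof of this lemma; it is simply recalled from \cite{S2} (restated for $E>0$), and your computation is precisely the expected one.
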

Note that, in contrast to the case $E <0$, this Lemma holds even when $a_1 = a_2 =1$, thanks to the sign in Lemma \ref{lemb}.

The following Lemma extends \cite[Lemma 3.2]{S2} to the positive energy case.
\begin{lem} \label{lemdifh}
Let $D \subset \{ x \in \R^2 \, : \, |x| \leq l\}$, $E > 0$, $v_1, v_2$ be two potentials satisfying \eqref{direig}, \eqref{cv1}, \eqref{cv2}, $\Phi_1(E), \Phi_2(E)$ the corresponding Dirichlet-to-Neumann operator and $b_1, b_2$ the corresponding generalised scattering amplitude. Let $\|v_j\|_{m,1} \leq N$, $j=1,2$. Then we have
\begin{align} \label{estdifh}
|b_2(\lambda) - b_1(\lambda)|\leq c(D,N,m)\exp\left[l\sqrt{|E|}\left||\lambda|-\frac{1}{|\lambda|}\right|\right]\|\Phi_2(E) - \Phi_1(E)\|_*,
\end{align}
for $\lambda \neq 0$, where $\| \cdot\|_* = \|\cdot \|_{L^{\infty}(\partial D)\to L^{\infty}(\partial D)}$.
\end{lem}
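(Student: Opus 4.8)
The plan is to reduce the difference $b_2(\lambda)-b_1(\lambda)$ to a boundary integral tested against $\Phi_2(E)-\Phi_1(E)$ by an Alessandrini--Novikov type identity, and then to control that integral by the pointwise bounds on the Faddeev eigenfunctions from Lemma \ref{lem21}; the exponential factor in \eqref{estdifh} will come entirely from the growth of these eigenfunctions on the boundary.

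First I would recast the scattering amplitude. Writing $\lambda=re^{i\theta}$ and $k=k(\lambda)\in\SE$, a direct computation of the phase in \eqref{defb} shows that the exponential factor multiplied by $\mu(z,\lambda)$ equals $e^{i\bar k x}\psi(x,k)$, so that
\[
b(\lambda,E)=\frac{1}{(2\pi)^2}\int_{\C}e^{i\bar k x}v(x)\psi(x,k)\,d\Ree z\,d\Imm z,
\]
i.e. $b$ is a value of the generalised scattering amplitude at the two momenta $k$ and $\bar k$ on $\SE$. This identifies which eigenfunctions enter the boundary identity. Invoking the positive-energy analogue of the identity of \cite{N5} (based on \cite{A}), exactly as in \cite[Lemma 3.2]{S2}, one has for $k(\lambda)\in\SE\setminus(\E_E\cup\R^2)$
\[
b_2(\lambda)-b_1(\lambda)=\frac{1}{(2\pi)^2}\int_{\partial D}\psi_1(x,\bar k)\,\big[(\Phi_2(E)-\Phi_1(E))\,\psi_2(\cdot,k)\big](x)\,dx,
\]
where $\psi_j$ is the Faddeev eigenfunction of $v_j$; under \eqref{cv2} and Remark \ref{reme} there are no exceptional points, so both eigenfunctions are well defined. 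H\"older's inequality on the compact curve $\partial D$, together with the definition of $\|\cdot\|_*$, then gives
\[
|b_2(\lambda)-b_1(\lambda)|\leq\frac{|\partial D|}{(2\pi)^2}\,\|\psi_1(\cdot,\bar k)\|_{L^\infty(\partial D)}\,\|\psi_2(\cdot,k)\|_{L^\infty(\partial D)}\,\|\Phi_2(E)-\Phi_1(E)\|_*.
\]

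Next I would estimate the two boundary factors. Writing $\psi_j(x,k)=e^{ikx}\mu_j(x,k)$ and applying Lemma \ref{lem21} -- whose hypothesis $k^2=E\geq R$ is guaranteed by \eqref{cv2} once $E_1\geq R(m,N)$, using the standard bound $\|\hat v_j\|_{\alpha,m}\leq c(m)\|v_j\|_{m,1}\leq c(m)N$ -- one obtains $|\mu_j|\leq c(N,m)$ uniformly in $x$. The exponential factor is controlled through $|e^{ikx}|=e^{-\Imm(kx)}$: with $\lambda=re^{i\theta}$ one computes $\Imm(kx)=\tfrac{\sqrt E}{2}(r-1/r)(x_1\sin\theta-x_2\cos\theta)$, whence $|e^{ikx}|\leq\exp\!\left[\tfrac{l\sqrt E}{2}\left||\lambda|-\tfrac{1}{|\lambda|}\right|\right]$ for $|x|\leq l$, and the same bound holds for $|e^{i\bar k x}|$ since $\Imm(\bar k x)=-\Imm(kx)$. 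Multiplying the two factors produces precisely $\exp\!\left[l\sqrt E\left||\lambda|-\tfrac{1}{|\lambda|}\right|\right]$, and collecting constants yields \eqref{estdifh} with $c(D,N,m)=|\partial D|(2\pi)^{-2}c(N,m)^2$. I note that the estimate is robust to the exact conjugation convention in the identity: whether $\psi_1$ is evaluated at $k$, $\bar k$ or $-k$, the modulus of the corresponding plane wave is governed by the same quantity $\left||\lambda|-1/|\lambda|\right|$. The boundary case $|\lambda|=1$ (real $k$), where the exponential factor is $1$, is handled either by continuity or directly through the real-$k$ statement of Lemma \ref{lem21} applied to $\mu_\pm$.

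I expect the main obstacle to be the rigorous justification of the boundary identity at positive energy rather than the subsequent estimates. For $E>0$ the spectral parameter $k(\lambda)$ is genuinely complex off the unit circle, and one must verify that the Green's-formula derivation of the Alessandrini--Novikov identity goes through on $\SE\setminus\R^2$ -- in particular that the exceptional set is empty (Remark \ref{reme}), so that $\psi_1(\cdot,\bar k)$ and $\psi_2(\cdot,k)$ exist and are unique, and that the integral equation \eqref{inteq2} for $\mu_j$ converts the interior Alessandrini identity into the stated boundary integral. Once this structural identity is secured, the bound is, as above, a one-line H\"older argument combined with Lemma \ref{lem21}.
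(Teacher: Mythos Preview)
Your proposal is correct and follows essentially the same route as the paper: invoke the Alessandrini--Novikov boundary identity \eqref{aless} from \cite{N5}, bound the resulting integral by $\|\psi_1(\cdot,\bar k)\|_{L^\infty(\partial D)}\|\Phi_2-\Phi_1\|_*\|\psi_2(\cdot,k)\|_{L^\infty(\partial D)}$, and then control each factor via $\psi_j=e^{ikx}\mu_j$ together with Lemma~\ref{lem21}. Your explicit computation of $\Imm(kx)$ is a slightly more detailed version of the paper's estimate $|e^{\frac{i\sqrt E}{2}(\bar z\lambda+z/\lambda)}|\leq e^{\frac{\sqrt E}{2}l\,||\lambda|-1/|\lambda||}$, and the extra factor $|\partial D|$ you record is simply absorbed into $c(D,N,m)$.
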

\begin{proof}
We have the following identity:
\begin{align} \label{aless}
b_2(\lambda) - b_1(\lambda) = \left(\frac{1}{2\pi}\right)^2\int_{\partial D}\psi_1(x,\overline{k(\lambda)})(\Phi_2(E) - \Phi_1(E))\psi_2(x,k(\lambda)) dx,
\end{align}
where $\psi_i(x,k)$ are the Faddeev functions associated to the potential $v_i$, $i=1,2$. This identity is a particular case of the one in \cite[Theorem 1]{N5}: we refer to that paper for a proof.

From this identity we obtain:
\begin{align} \label{estlem1}
|b_2(\lambda) - b_1(\lambda)| \leq \frac{1}{(2\pi)^2} \|\psi_1(\cdot,k)\|_{L^{\infty}(\partial D)}\|\Phi_2(E) - \Phi_1(E)\|_* \|\psi_2(\cdot,k)\|_{L^{\infty}(\partial D)}.
\end{align}
Now using Lemma \ref{lem21} and the change of variables in Section \ref{sec2}, we get
\begin{align*}
&\|\psi_j(\cdot,k(\lambda))\|_{L^{\infty}(\partial D)} \leq \|e^{\frac{i\sqrt{E}}{2} (\bar z\lambda + z / \lambda)} \mu_j(\cdot,k(\lambda))\|_{L^{\infty}(\partial D)}\\
&\quad \leq e^{\frac{\sqrt{E}}{2}l |\lambda -1/ \bar \lambda|} \|\mu_j(\cdot,k(\lambda))\|_{L^{\infty}(\partial D)}\\
&\quad \leq e^{\frac{\sqrt{E}}{2}l \left||\lambda| -|1/ \lambda|\right|} \left(\|\mu_j(\cdot,k(\lambda)) -1\|_{L^{\infty}(\partial D)} +\|1\|_{L^{\infty}(\partial D)}\right)\\
&\quad \leq c(D,N,m) e^{\frac{\sqrt{E}}{2}l \left||\lambda| -|1/ \lambda|\right|},
\end{align*}
for $j=1,2$. This, combined with \eqref{estlem1}, gives \eqref{estdifh}.
\end{proof}

The following proposition shows that the map $\Phi(E) \to \rho(\lambda,\lambda')$ is Lipschitz stable.
\begin{prop} \label{proprho}
Let $D \subset \{ x \in \R^2 \, : \, |x| \leq l\}$, $E > 0$, $v_1, v_2$ be two potentials satisfying \eqref{direig}, \eqref{cv1}, \eqref{cv2}, $\Phi_1(E), \Phi_2(E)$ the corresponding Dirichlet-to-Neumann operator and $\rho_1, \rho_2$ the corresponding functions as defined in \eqref{defrho}. Let $\|v_j\|_{m,1} \leq N$, $j=1,2$. Then we have
\begin{align} \label{estdifrho}
\|\rho_2-\rho_1\|_{L^2(T \times T)} \leq c(D,N,m) \|\Phi_2(E) - \Phi_1(E)\|_*,
\end{align}
for $E \geq E_2 = E_2(N,m)$, where $T = \{ \lambda \in \C \, : \, |\lambda|=1 \}$ and $\| \cdot\|_* = \|\cdot \|_{L^{\infty}(\partial D)\to L^{\infty}(\partial D)}$.
\end{prop}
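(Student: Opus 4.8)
The plan is to mirror the proof of Lemma~\ref{lemdifh}: first obtain a Lipschitz difference bound for the half-scattering amplitudes $h_\pm$ on the torus $T\times T$, then transfer it to the auxiliary kernels $h_1,h_2$ of \eqref{defh1}--\eqref{defh2}, and finally propagate it to $\rho$ through the integral equations \eqref{defrho}. Throughout I write $h_\pm^{(i)},h_1^{(i)},h_2^{(i)},\rho^{(i)}$ for the quantities attached to $v_i$ ($i=1,2$), so that $\rho^{(i)}=\rho_i$.

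For the difference of the $h_\pm$, exactly as in \eqref{aless} the general identity of \cite[Theorem~1]{N5} should yield a representation
$$h_{\pm}^{(2)}(\lambda,\lambda')-h_{\pm}^{(1)}(\lambda,\lambda')=\frac{1}{(2\pi)^2}\int_{\partial D}\phi^{(1)}(x,\lambda')\big(\Phi_2(E)-\Phi_1(E)\big)\psi_{\pm}^{(2)}(x,\lambda)\,dx,$$
where $\phi^{(1)}(\cdot,\lambda')$ is the ($v_1$-)factor dual to the plane wave in \eqref{defhpm} and $\psi_\pm^{(2)}$ is the Faddeev function of $v_2$. On $|\lambda|=|\lambda'|=1$ the spectral parameter $k$ is real, so all exponential factors have modulus $1$, while $\mu_\pm^{(j)}$ is bounded uniformly by Lemma~\ref{lem21} (with $k^2=E\geq R$, valid once $E\geq E_2$). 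Hence both boundary factors are $\leq c(D,N,m)$ in $L^\infty(\partial D)$, giving the uniform pointwise — and thus $L^2(T\times T)$ — bound $\|h_\pm^{(2)}-h_\pm^{(1)}\|_{L^2(T\times T)}\leq c(D,N,m)\|\Phi_2(E)-\Phi_1(E)\|_*$. Since the $\theta$-weights in \eqref{defh1}--\eqref{defh2} have modulus $\leq 1$, the same bound holds for $h_1^{(2)}-h_1^{(1)}$ and $h_2^{(2)}-h_2^{(1)}$.

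To invert the integral equations uniformly, I write each line of \eqref{defrho} as $(I+\pi i A_i)\rho^{(i)}=-\pi i\,h_j^{(i)}$, where $A_i$ is the integral operator in the $\lambda''$-variable (with $\lambda$ a spectator) whose kernel is $\theta[\cdots]h_j^{(i)}(\lambda'',\lambda')$. The Hilbert--Schmidt estimate gives $\|A_i\|_{L^2(T\times T)\to L^2(T\times T)}\leq\|h_j^{(i)}\|_{L^2(T\times T)}$. The decisive point — and the main obstacle — is that this is small at high energy: since $\mu_\pm\to 1$ by Lemma~\ref{lem21}, $h_\pm^{(i)}(\lambda,\lambda')$ is, up to a vanishing correction, $\hat v_i$ evaluated at a frequency of size $\sim\sqrt E\,|\lambda-\lambda'|$, so by the smoothness of $v_i$ it concentrates near the diagonal and $\|h_\pm^{(i)}\|_{L^2(T\times T)}=O(E^{-1/4})\to 0$. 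Thus $\|\pi i A_i\|<1$ for $E\geq E_2$, the inverse $(I+\pi i A_i)^{-1}$ is bounded by $2$ on $L^2(T\times T)$, and the a priori bound $\|\rho^{(i)}\|_{L^2(T\times T)}\leq 2\pi\|h_j^{(i)}\|\leq c(N,m)$ follows.

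Finally, subtracting the equations for $\rho^{(2)}$ and $\rho^{(1)}$,
$$(I+\pi i A_2)\big(\rho^{(2)}-\rho^{(1)}\big)=-\pi i\big(h_j^{(2)}-h_j^{(1)}\big)-\pi i\,(A_2-A_1)\rho^{(1)}.$$
The operator $A_2-A_1$ has kernel $\theta[\cdots](h_j^{(2)}-h_j^{(1)})$, so $\|(A_2-A_1)\rho^{(1)}\|\leq\|h_j^{(2)}-h_j^{(1)}\|\,\|\rho^{(1)}\|\leq c(N,m)\|h_j^{(2)}-h_j^{(1)}\|$. Applying the uniform resolvent bound and the difference bound above yields
$$\|\rho^{(2)}-\rho^{(1)}\|_{L^2(T\times T)}\leq c\big(1+\|\rho^{(1)}\|\big)\,\|h_j^{(2)}-h_j^{(1)}\|_{L^2(T\times T)}\leq c(D,N,m)\,\|\Phi_2(E)-\Phi_1(E)\|_*.$$
Running the same argument with the second line of \eqref{defrho} (built from $h_2$) accounts for the full kernel and completes the proof. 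The one genuinely delicate input is the high-energy $L^2$-smallness of $h_\pm$ used for the invertibility; everything else is the bounded-inverse bookkeeping of a Riemann--Hilbert-type integral equation.
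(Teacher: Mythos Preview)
Your overall architecture---bound $h_\pm$ differences, pass to $h_1,h_2$, then invert the $\rho$-equations using high-energy smallness of the kernels---matches the paper's, and your last two steps are essentially correct. The genuine gap is in the first step.

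You write that \cite[Theorem~1]{N5} ``should yield'' a boundary identity
\[
h_{\pm}^{(2)}(\lambda,\lambda')-h_{\pm}^{(1)}(\lambda,\lambda')=\frac{1}{(2\pi)^2}\int_{\partial D}\phi^{(1)}(x,\lambda')\big(\Phi_2-\Phi_1\big)\psi_{\pm}^{(2)}(x,\lambda)\,dx,
\]
with $\phi^{(1)}$ an unspecified ``$v_1$-dual'' of the plane wave in \eqref{defhpm}. But Alessandrini-type identities require both factors to be solutions of $(-\Delta+v_j-E)u=0$, and the plane wave $e^{-ik(\lambda')x}$ in the definition of $h_\pm$ is \emph{not} a solution for $v_1$. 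There is no obvious $v_1$-solution $\phi^{(1)}$ whose insertion produces exactly $h_\pm^{(2)}-h_\pm^{(1)}$; the paper does not claim such an identity, and it is not what \cite{N5} is cited for. What the paper actually does is: (i) use the identity from \cite{N5} for the \emph{classical scattering amplitude} $f_j$ (both boundary factors are genuine solutions $\varphi_j^+$), getting $\|f_2-f_1\|_{L^2(T\times T)}\le c\|\Phi_2-\Phi_1\|_*$; and then (ii) use the Faddeev integral equation \eqref{relhrho}, which expresses $h_\pm$ in terms of $f$, to transfer the bound. Step (ii) requires exactly the kind of high-energy invertibility argument you carry out later for $\rho$, applied first to the $h_\pm\leftrightarrow f$ system (the operators $P_\pm^j$, $Q_\pm^j$ have norm $O(E^{-1/4})$). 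So the passage $\Phi\to h_\pm$ is not one identity but an identity plus an inversion, and you have skipped the inversion.

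One minor additional point: for the $\rho$-step, the paper also records a pointwise decay estimate for $\rho$ (Lemma~\ref{lemrho}) to ensure the solutions of \eqref{defrho} sit in the right space before subtracting. Your Hilbert--Schmidt argument on $L^2(T\times T)$ is adequate for the difference estimate once existence is known, but you should say a word about why $\rho^{(i)}\in L^2(T\times T)$ to begin with.
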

\begin{proof}
We begin proving 
\begin{align}  \label{estdiff}
 \|f_2 - f_1\|_{L^2(T \times T)} \leq c(D,N,m) \|\Phi_2(E) - \Phi_1(E)\|_*,
\end{align}
where $f_j$ is the scattering amplitude related to potential $v_j$, $j=1,2$, defined as
\begin{align} \nonumber
f_j(\lambda,\lambda')=\left(\frac{1}{2\pi}\right)^2\int_{\C}\exp\left[-\frac{i\sqrt{E}}{2}\left(\lambda'\bar z+ \frac{z}{\lambda'}\right)\right] v_j(z) \varphi_j^+(z,\lambda)d\Ree z\, d\Imm z.
\end{align}
Here we used the change of variables in Section 2, and $\varphi_j^+(x,k) = \psi_{k/|k|}(x,k)$, where $\psi_{\gamma}(x,k)$ was defined in \eqref{defpsig}.
The following identity holds:
\begin{align} \nonumber
&f_2(\lambda,\lambda')-f_1(\lambda,\lambda')\\ \nonumber
&\quad =\left(\frac{1}{2\pi}\right)^2\int_{\partial D}\varphi^+_1(x,-k(\lambda'))(\Phi_2(E) - \Phi_1(E))\varphi^+_2(x,k(\lambda)) dx,
\end{align}
for $|\lambda|=|\lambda'|=1$. This is proved in \cite[Theorem 1]{N5}. We then obtain
\begin{align} \label{estlem2}
&|f_2(\lambda,\lambda')-f_1(\lambda,\lambda')|\\ \nonumber
&\quad \leq \frac{1}{(2\pi)^2} \|\varphi^+_1(\cdot,k)\|_{L^{\infty}(\partial D)}\|\Phi_2(E) - \Phi_1(E)\|_* \|\varphi^+_2(\cdot,k)\|_{L^{\infty}(\partial D)},
\end{align}
where $|\lambda|=|\lambda'|=1$ and so $k \in \R^2$, $k^2=E$. From Lemma \ref{lem21} we get
\begin{align*}
&\|\varphi^+_j(\cdot,k)\|_{L^{\infty}(\partial D)} = \|\mu_{k/|k|}(\cdot,k)\|_{L^{\infty}(\partial D)} \leq c(D,N,m),
\end{align*}
for $j=1,2$, since $k\in \R^2$, $k^2=E$. This, combined with \eqref{estlem2}, gives \eqref{estdiff}.\smallskip

It is now useful to recall the following integral equations which relate $f_j$ with $h^j_{\pm}$ (see \cite{F2, N3}):
\begin{align} \label{relhrho}
&h^{j}_{\pm}(\lambda, \lambda') - \pi i \int_{|\lambda''|=1}h^{j}_{\pm}(\lambda,\lambda'')\theta\left[ \pm \frac{1}{i}\left(\frac{\lambda''}{\lambda}-\frac{\lambda}{\lambda''}\right)\right] \\ \nonumber
&\qquad \times f_{j}(\lambda'',\lambda') |d\lambda''| = f_{j}(\lambda,\lambda') \qquad j=1,2.
\end{align}
Subtracting this equation for $j=2$ and $j=1$ we obtain
\begin{align} \label{eqhf}
\left(I+P^2_{\pm}\right)\left(h^2_{\pm}-h^1_{\pm}\right)  = \left(I+Q^1_{\pm}\right)\left(f_2-f_1\right),
\end{align}
where
\begin{align}
(P^j_{\pm}u)(\lambda,\lambda') &= -\pi i \int_{\lambda'' \in T}\! \! \! \! \! \! \! \! \! \! u(\lambda, \lambda'')\theta\left[ \pm \frac{1}{i}\left(\frac{\lambda''}{\lambda}-\frac{\lambda}{\lambda''}\right)\right] f_{j}(\lambda'',\lambda') |d\lambda''|,\\
(Q^j_{\pm}u)(\lambda,\lambda')&=\pi i \int_{\lambda'' \in T} \! \! \! \! \! \! \! \! \! \! h^{j}_{\pm}(\lambda,\lambda'')\theta\left[ \pm \frac{1}{i}\left(\frac{\lambda''}{\lambda}-\frac{\lambda}{\lambda''}\right)\right] u(\lambda,\lambda')|d\lambda''|,
\end{align}
for $u \in L^p(T^2)$, $p > 1$. In \cite[\S 2]{N4} it is proved that 
\begin{align}
|f_j(\lambda,\lambda')| \leq 2 \|\hat v_j\|_{\alpha,m} (1+E|\lambda-\lambda'|^2)^{-m/2},\\ \label{decrh}
|h^j_{\pm}(\lambda,\lambda')| \leq 2 \|\hat{v_j}\|_{\alpha,m} (1+E|\lambda-\lambda'|^2)^{-m/2},
\end{align}
for $\lambda, \lambda' \in T$ and $E \geq E_1 = E_1(N,D,m)$. From these inequalities (and also inequalities (2.45) of \cite{N4}) we find that
\begin{align}
 \|P^j_{\pm}u\|_{L^2(T\times T)} \leq \frac{c_1(N,m)}{E^{1/4}} \|u\|_{L^2(T\times T)},\\
 \|Q^j_{\pm}u\|_{L^2(T\times T)} \leq \frac{c_2(N,m)}{E^{1/4}} \|u\|_{L^2(T\times T)}.
\end{align}
Choose $E'_1 \geq E_1$ such that $\max\left(\frac{c_1(N,m)}{{E'}_1^{1/4}},\frac{c_2(N,m)}{{E'}_1^{1/4}}\right) \leq \frac 1 2$. Then $P^j_{\pm}$ is invertible on $L^2(T\times T)$ and from \eqref{eqhf} we obtain
\begin{equation}\label{hpmf}
 \|h^2_{\pm}-h^1_{\pm}\|_{L^2(T \times T)} \leq c_3(N,D,m) \|f_2-f_1 \|_{L^2(T \times T)},
\end{equation}
for $E \geq E'_1$.
It is straightforward to see that
\begin{equation}\label{hgammahpm}
 \|h^2_{\beta}-h^1_{\beta}\|_{L^2(T \times T)} \leq \|h^2_{+}-h^1_{+}\|_{L^2(T \times T)} +\|h^2_{-}-h^1_{-}\|_{L^2(T \times T)},\; \beta =1,2,
\end{equation}
where $h^j_{\beta}$ are the auxiliary functions defined in \eqref{defh1} and \eqref{defh2} related to the potential $v_j$, $j=1,2$.

In order to finish the proof, it is sufficient to remark that the functions $h^j_{\beta}$ satisfy inequality \eqref{decrh}, as well as $\rho_j$. For this, we will need the following lemma, which will be proved later.
\begin{lem} \label{lemrho}
The function $\rho$, defined in \eqref{defrho} for a potential $v$ such that $\|v\|_{m,1} \leq N$, satisfies the inequality
 \begin{equation} \label{decrrho}
 |\rho(\lambda,\lambda')| \leq c(N) (1+E|\lambda-\lambda'|^2)^{-m/2},
\end{equation}
for $\lambda, \lambda' \in T$ and $E \geq \tilde E_1 = \tilde E_1(N,D,m)$.
\end{lem}

Now we see that thanks to Lemma \ref{lemrho}, equations \eqref{defrho} have the same structure of equations \eqref{relhrho}, and the kernels satisfy the same inequalities for $E \geq E_2 (\tilde E_1,E'_1)$. Thus we obtain directly
\begin{equation} \label{rhohgamma}
 \|\rho_2-\rho_1\|_{L^2(T \times T)} \leq c_4(N,D,m) \|h^2_{\beta}-h^1_{\beta}\|_{L^2(T \times T)}, \beta =1,2,
\end{equation}
for $E \geq E_2$.

Now inequalities \eqref{rhohgamma}, \eqref{hgammahpm}, \eqref{hpmf} together with \eqref{estdiff} give \eqref{estdifrho}, which finishes the proof.
\end{proof}

\begin{proof}[Proof of Lemma \ref{lemrho}]
 We write the integral equation defining $\rho$, \eqref{defrho1}, as follows:
\begin{equation} \label{eqqrho}
(I+H_1)\rho(\lambda, \lambda') = -\pi i h_1(\lambda,\lambda'),
\end{equation}
where
\begin{equation}
H_1 \rho(\lambda,\lambda')= \pi i\int_{|\lambda''|=1}\rho(\lambda,\lambda'')\theta\left[\frac{1}{i}\left(\frac{\lambda'}{\lambda''}-\frac{\lambda''}{\lambda'}\right) \right]h_1(\lambda'',\lambda')|d\lambda''|.
\end{equation}
We want to prove that this equation has a unique solution in the space of complex-valued functions $g(\lambda, \lambda')$ defined on $T^2$, such that 
\begin{equation} \label{ineg}
|g(\lambda, \lambda')| \leq c (1+E|\lambda-\lambda'|^2)^{-m/2},
\end{equation}
for some constant $c$. Let us call this function space $S$ and define $\|g\|_S = \inf c$ such that \eqref{ineg} is verified.

We have that $\|H_1 g\|_S \leq c E^{-1/2} \|g\|_S $. Indeed, since $h_1$ satisfies inequality \eqref{decrh}, the following estimate holds
\begin{align}
|H_1 g (\lambda,\lambda')| \leq c \|g\|_S\int_{T}\frac{|d\lambda''|}{\left((1+E|\lambda-\lambda''|^2)(1+E|\lambda''-\lambda'|^2)\right)^{m/2}}.
\end{align}
We split the circle $T$, at fixed $\lambda, \lambda'$ in two sets: the first contains the points $\lambda''$ that are closer to $\lambda$ than to $\lambda'$, i.e. $|\lambda'' - \lambda|\leq |\lambda'' -\lambda'|$ and the second is the complement. For $\lambda''$ in the first set we have that $|\lambda'' - \lambda'| \geq \frac 1 2 |\lambda' - \lambda|$ while for $\lambda''$ in the second $|\lambda'' - \lambda| \geq \frac 1 2 |\lambda' - \lambda|$. Thus we obtain
\begin{align}
|H_1 g (\lambda,\lambda')| \leq c \frac {\|g\|_S}{(2+E|\lambda-\lambda'|^2)^{m/2}}\int_{T}\frac{|d\lambda''|}{(1+E|\lambda''- \tilde\lambda|^2)^{m/2}},
\end{align}
where $\tilde \lambda$ is some point in $T$. Using inequality \eqref{bella} we obtain the estimate for $H_1$.
Then for sufficiently large $E$, equation \eqref{eqqrho} has a unique solution in $S$ by iteration. This finishes the proof of Lemma \ref{lemrho}.
\end{proof}

In the following proposition we prove that the map $\Phi(E) \to r(\lambda)$ is logarithmic stable.
\begin{prop}\label{propestr}
Let $E$ be such that $E\geq  E_3 = \max ((2R)^2, E_0)$, where $R$ is defined in Lemma \ref{lemb} and $E_0$ in Remark \ref{reme}, let $v_1, v_2$ be two potentials satisfying \eqref{direig}, \eqref{cv1}, \eqref{cv2}, $\Phi_1(E), \Phi_2(E)$ the corresponding Dirichlet-to-Neumann operator and $r_1, r_2$ as defined in \eqref{defr}. Let $\|v_k\|_{m,1} \leq N$, $k=1,2$. Then for every $p \geq 1 $ there exists a constant $\theta = \theta(D,N,m,p)$ such that for any $0 \leq \kappa < \frac{1}{4(l+1)}$, where $l = \mathrm{diam}(D)$, and for $E \geq E_3$ we have
\begin{align} \label{mainesth}
\left\|\left(|\lambda|+\frac{1}{|\lambda|}\right) |r_2 - r_1|\right\|_{L^p(\C)} \! \! \! \! \! \! \! \! &\leq \theta \bigg[ E^{-1}\left(E^{1/2}+\kappa \log(3+\delta^{-1})\right)^{-(m-2)} \\ \nonumber
&\quad + \frac{\delta(3+\delta^{-1})^{4\kappa(l+1)}}{E^{1/2p}}\bigg],
\end{align}
where $\delta = \|\Phi_2(E) - \Phi_1(E)\|_{L^{\infty}(\partial D)\to L^{\infty}(\partial D)}$.
\end{prop}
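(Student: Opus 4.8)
The plan is to reduce the whole estimate to the generalised scattering amplitude $b$ and then split the $\lambda$-plane into a central annulus, where the data error $\delta$ is controlled through Lemma \ref{lemdifh}, and its complement near $0$ and $\infty$, where I exploit the a priori decay of $b$ through Lemma \ref{lemestrr}. Since by \eqref{defr} the prefactor of $b_j$ in $r_j$ is independent of the potential, one has $|r_2-r_1|=\frac{\pi}{|\lambda|}|b_2-b_1|$, hence
\begin{equation} \nonumber
\left(|\lambda|+\frac{1}{|\lambda|}\right)|r_2-r_1| = \pi\left(1+\frac{1}{|\lambda|^2}\right)|b_2-b_1|.
\end{equation}
The crucial choice is the radius of the annulus: I would set $a_2 = 1 + \kappa\log(3+\delta^{-1})/\sqrt{E}$ and $a_1 = 1/a_2$, so that $\sqrt{E}\,a_2 = \sqrt{E}+\kappa\log(3+\delta^{-1})$. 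Because $E\geq E_3\geq (2R)^2$ forces $\sqrt{E}/(2R)\geq 1\geq 2R/\sqrt{E}$, the inequalities $a_1\leq 1\leq a_2$ hold, so these radii are admissible in Lemma \ref{lemestrr}.

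On the outer region $\{|\lambda|<a_1\}\cup\{|\lambda|>a_2\}$ I would bound $|r_2-r_1|\leq |r_1|+|r_2|$ and apply Lemma \ref{lemestrr}: for large $\lambda$ one has $|\lambda|+1/|\lambda|\leq 2|\lambda|$, so I use the exponent $j=1$, giving $E^{-m/2}a_2^{-m+2/p}$; for small $\lambda$ one has $|\lambda|+1/|\lambda|\leq 2/|\lambda|$, so I use $j=-1$, giving $E^{-m/2}a_1^{m-2+2/p}=E^{-m/2}a_2^{-(m-2)-2/p}$. Since $a_2\geq 1$ and $p\geq 1$, both are bounded by $E^{-m/2}a_2^{-(m-2)}=E^{-1}(\sqrt{E}\,a_2)^{-(m-2)}$, which is exactly the first, logarithmic term $E^{-1}(\sqrt{E}+\kappa\log(3+\delta^{-1}))^{-(m-2)}$ of \eqref{mainesth}.

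On the annulus $A=\{a_1\leq|\lambda|\leq a_2\}$ I would insert the difference estimate $|b_2-b_1|\leq c(D,N,m)\,\delta\,\exp[l\sqrt{E}\,||\lambda|-1/|\lambda||]$ from Lemma \ref{lemdifh}. The elementary inequality $a_2-1/a_2=(a_2-1)+\frac{a_2-1}{a_2}\leq 2(a_2-1)$, valid for $a_2\geq 1$, shows that $l\sqrt{E}\,||\lambda|-1/|\lambda||\leq 2l\kappa\log(3+\delta^{-1})$ throughout $A$, so the exponential is at most $(3+\delta^{-1})^{2l\kappa}$. The remaining factors contribute only powers of $a_2$: the weight satisfies $1+1/|\lambda|^2\leq 1+a_2^2$, and the $p$-th root of the area obeys $(\pi(a_2^2-a_1^2))^{1/p}\leq c\,(a_2-1/a_2)^{1/p}(a_2+1/a_2)^{1/p}$, where the factor $(a_2-1/a_2)^{1/p}\leq c(\kappa\log(3+\delta^{-1}))^{1/p}E^{-1/2p}$ produces precisely the $E^{-1/2p}$. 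Since $a_2\leq 1+\kappa\log(3+\delta^{-1})$, all the extra factors are powers of $\log(3+\delta^{-1})$; absorbing these into a slightly larger power of $(3+\delta^{-1})$ — permissible because $4\kappa(l+1)-2l\kappa=2\kappa(l+2)>0$ leaves a positive margin — bounds the annulus contribution by $\delta\,(3+\delta^{-1})^{4\kappa(l+1)}E^{-1/2p}$, the second term of \eqref{mainesth}. Adding the two regional estimates gives the claim, with the understanding that for $\delta$ bounded away from $0$ the inequality holds trivially by enlarging $\theta$, so the log-absorption is only needed in the regime of small $\delta$.

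The main obstacle is the annulus estimate. One must control the exponential factor $\exp[l\sqrt{E}\,||\lambda|-1/|\lambda||]$, which is genuinely large when $\kappa\log(3+\delta^{-1})$ exceeds $\sqrt{E}$ (so that $a_2\gg 1$), together with the growth of both the area of $A$ and the weight $1+1/|\lambda|^2$. The key observation making everything work is that all of these quantities are ultimately powers of $a_2-1/a_2\leq 2\kappa\log(3+\delta^{-1})/\sqrt{E}$, so they are swallowed by the exponent $4\kappa(l+1)$ of $(3+\delta^{-1})$; the hypothesis $\kappa<1/(4(l+1))$ keeps that exponent below $1$, which is exactly what guarantees that the Hölder term tends to $0$ as $\delta\to 0$.
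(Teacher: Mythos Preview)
Your approach is correct and matches the paper's: the same three-region split, the same choice $a_2=1/a_1=1+\kappa\log(3+\delta^{-1})/\sqrt{E}$, and the same application of Lemmas~\ref{lemestrr} and~\ref{lemdifh} on the complement and on the annulus respectively. The only cosmetic difference is in the annulus estimate: the paper first absorbs the weight $(|\lambda|+1/|\lambda|)/|\lambda|$ into the exponential via the pointwise inequality \eqref{estexp} and then extracts $E^{-1/2p}$ by integrating $e^{4pl\sqrt{E}(r-1)}r\,dr$, whereas you take sup-bounds on the annulus and read off $E^{-1/2p}$ from the area factor $(a_2-1/a_2)^{1/p}$, afterwards absorbing the residual powers of $\kappa\log(3+\delta^{-1})$ into the margin $4\kappa(l+1)-2l\kappa=2\kappa(l+2)$; note that this last absorption is indeed uniform in $\kappa$, since the quantity to be controlled depends only on the product $x=\kappa\log(3+\delta^{-1})$ and $x^{c}e^{-(2l+4)x}$ is bounded on $[0,\infty)$ by a constant depending only on $l$ and $p$.
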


\begin{proof}
We choose $0<a_1 \leq 1 \leq a_2$ to be determined and split down the left hand side of \eqref{mainesth} as follows:
\begin{align*}
&\left\|\left(|\lambda|+\frac{1}{|\lambda|}\right) |r_2 - r_1|\right\|_{L^p(\C)} \leq I_1 + I_2 + I_3, \\
&\qquad I_1 =\left\|\left(|\lambda|+\frac{1}{|\lambda|}\right) |r_2 - r_1|\right\|_{L^p(|\lambda| <a_1)}, \\
&\qquad I_2 = \left\|\left(|\lambda|+\frac{1}{|\lambda|}\right) |r_2 - r_1|\right\|_{L^p(a_1<|\lambda| <a_2)},\\
&\qquad I_3 = \left\|\left(|\lambda|+\frac{1}{|\lambda|}\right) |r_2 - r_1|\right\|_{L^p(|\lambda| >a_2)}.
\end{align*}

From \eqref{estlem21} and \eqref{estlem22} we have
\begin{align}\label{esta}
I_1 &\leq c(N,p,m)  E^{-m/2} a_1^{m-2+2/p}, \\ \label{estb}
I_3 &\leq c(N,p,m)  E^{-m/2} a_2^{-m+2/p}
\end{align}

Lemma \ref{lemdifh} yields that $I_2$ can be estimated from above by
\begin{align} \label{estc}
c(D,N,p)\frac{\delta}{E^{1/2p}} \left(e^{4(l+1)\sqrt{E}\left(\frac{1}{a_1}-1\right)}+ e^{4(l+1)\sqrt{E}(a_2-1)}\right).
\end{align}
Here we used the fact that
\begin{align} \label{estexp}
 \left(\frac{1}{|\lambda|}+|\lambda|\right)\frac{e^{l\sqrt{E}\left||\lambda|-1/|\lambda|\right|}}{|\lambda|}&\leq e^{2l\sqrt{E}\left||\lambda|-1/|\lambda|\right|}\\ \nonumber
&\leq e^{4l\sqrt{E}(1/|\lambda|-1)}\chi_{|\lambda|<1}+e^{4l\sqrt{E}(|\lambda|-1)}\chi_{|\lambda|>1}
\end{align}
where $\chi_A$ is the characteristic function of a set $A \subset \C$.

Now we define, in \eqref{esta}-\eqref{estc},
\begin{equation}
a_2 = \frac{1}{a_1}=1 + \frac{\kappa \log (3+\delta^{-1})}{\sqrt{E}},
\end{equation}
for $0\leq \kappa<\frac{1}{4(l+1)}$. Note that $a_2 \geq 1 $ and $a_1 \leq 1$. Then we obtain, for every $p \geq 1$,
\begin{align} \label{estaa}
&I_j \leq c(N,p,m)E^{-1}(\sqrt{E}+\kappa \log (3+\delta^{-1}))^{-(m-2)}, \quad j=1,3,
\end{align}
To estimate $I_2$ we remark that
\begin{align}\label{estcc}
e^{4(l+1)\sqrt{E}\left(\frac{1}{a_1}-1\right)}+ e^{4(l+1)\sqrt{E}(a_2-1)}&=2e^{2(l+1)\kappa \log(3+\delta^{-1})} \\ \nonumber
&= 2(3+\delta^{-1})^{4(l+1)\kappa}.
\end{align}
Putting \eqref{estaa}-\eqref{estcc} together we find
\begin{align*}
\left\|\left(|\lambda|+\frac{1}{|\lambda|}\right) |r_2 - r_1|\right\|_{L^p(\C)} \! \! \! \! \! \! \! \! &\leq \theta_2 \bigg[ E^{-1}\left(E^{1/2}+\kappa \log(3+\delta^{-1})\right)^{-(m-2)} \\ \nonumber
&\quad + \frac{\delta(3+\delta^{-1})^{4(l+1)\kappa}}{E^{1/2p}}\bigg],
\end{align*}
which is estimate \eqref{mainesth}.
\end{proof}
\begin{rem}
In the following sections we will often implicitly use the basic fact that
\begin{equation} \nonumber
\| r_2 -r_1\|_{L^p(\C)} \leq \left\|\left(|\lambda|+\frac{1}{|\lambda|}\right) |r_2 - r_1|\right\|_{L^p(\C)}.
\end{equation}
\end{rem}

\section{Estimates for the non-local Riemann-Hilbert problem} \label{sechil}
We begin with an explicit formula relating a potential $v$, satisfying the assumption of Theorem \ref{maintheo}, with its associated functions $r(\lambda)$ and $\rho(\lambda,\lambda')$. This procedure allows us to explicitly solve the non-local Riemann-Hilbert problem presented in Section \ref{sec2} (see \eqref{dbar}--\eqref{defrhoz}).

The starting point is formula \eqref{vmu1}:
\begin{align}\label{defvu}
v(z)=2i\sqrt{E}\frac{\partial}{\partial z}\mu_{-1}(z),\\ \label{defmumeno}
\mu_{-1}(z)=\lim_{\lambda \to \infty}\lambda \left(\mu(z,\lambda)-1\right).
\end{align}
We follow the scheme of \cite[Theorem 6.1]{N3} in order to make $\mu$ explicit. In the following equations we omit the variable $z$ in the functions $\mu, e, K, \Omega, X$ for simplicity's sake. We have
\begin{align} \label{muex}
\mu(\lambda) = e(\lambda) +\frac{1}{2 \pi i}&\int_{|\zeta|=1}\Omega_1(\lambda,\zeta)K(\zeta)d\zeta-\Omega_2(\lambda,\zeta)\overline{K(\zeta)}d \bar \zeta,\\ \label{defe}
e(\lambda) &= 1-\frac{1}{\pi}\int_{\C}\frac{r(\zeta,z)\overline{e(\zeta)}}{\zeta-\lambda}d\Ree \zeta\, d\Imm \zeta,\\
\Omega_1(\lambda,\zeta)&=X_1(\lambda,\zeta)+iX_2(\lambda,\zeta),\\
\Omega_2(\lambda,\zeta)&=X_1(\lambda,\zeta)-iX_2(\lambda,\zeta),\\ \label{defX1}
X_1(\lambda,\zeta)+ \frac{1}{\pi}&\int_{\C}\frac{r(\eta,z)\overline{X_1(\eta,\zeta)}}{\eta-\lambda}d\Ree \eta\, d\Imm \eta = \frac{1}{2(\zeta-\lambda)},\\ \label{defX2}
X_2(\lambda,\zeta)+ \frac{1}{\pi}&\int_{\C}\frac{r(\eta,z)\overline{X_2(\eta,\zeta)}}{\eta-\lambda}d\Ree \eta\, d\Imm \eta = \frac{1}{2i(\zeta-\lambda)},\\ \label{defK}
K(\lambda)&=\mu_+(\lambda)-\mu_-(\lambda)\\ \nonumber
&= \int_{|\lambda'|=1}\rho(\lambda,\lambda',z)\Big[e(\lambda')\\ \nonumber
&\quad +\frac{1}{2\pi i}\int_{|\zeta|=1}\Omega_1(\lambda'(1+0),\zeta)K(\zeta)d\zeta\\ \nonumber
&\quad +\Omega_2(\lambda',\zeta)\overline{K(\zeta)}d\bar\zeta\Big]|d\lambda'|.
\end{align}
Let, for $p \geq 1, \nu \geq 0$,  $L^p_{\nu}(\C)$ be the function space
\begin{equation} \label{deflpnu}
\{f : \C\to\C \,| f(z) \in L^p(|z|\leq 1), |z|^{-\nu}f\left(\frac{1}{|z|}\right)\in L^p(|z|\leq 1)\},
\end{equation}
with the corresponding norm $\|f\|_{L^p_{\nu}(\C)} = \|f\|_{L^p(|z|\leq 1)}+\| |z|^{-\nu} f(1/|z|)\|_{L^p(|z|\leq 1)}$. From Lemma \ref{lemb} we have that $r_j \in L^p_{\nu}(\C)$ for all $\nu < m$. Then, from results of \cite{V}, equations \eqref{defe}, \eqref{defX1} and \eqref{defX2} are uniquely solved in $L^q_0(\C)$, $p/(p-1) \leq q <2$, and $e(\lambda)$ is continuous on $\C$. 

Then we can write
\begin{align} \label{mu-1}
 &\mu_{-1}(z) = \frac{1}{\pi}\int_{\C}r(\zeta,z)\overline{e(\zeta)}d\Ree \zeta\, d\Imm \zeta\\ \nonumber
&\quad +\frac{1}{2\pi i}\int_{|\zeta|=1}K(\zeta)\left[-1+\frac{1}{\pi}\int_{\C}r(\lambda,z)\overline{\Omega_2(\lambda,\zeta)}d\Ree \lambda \, d \Imm \lambda\right]d\zeta\\ \nonumber
&\quad -\frac{1}{2\pi i}\int_{|\zeta|=1}\overline{K(\zeta)}\left[\frac{1}{\pi}\int_{\C}r(\lambda,z)\overline{\Omega_1(\lambda,\zeta)}d\Ree \lambda \, d \Imm \lambda\right]d \bar \zeta,
\end{align}
where $\mu_{-1}$ was defined in \eqref{defmumeno}. 
Indeed, by Lebesgue's dominated convergence (using Lemma \ref{lemb}), we can calculate the following limits:
\begin{align*}
&\lim_{\lambda \to \infty}\lambda(e(\lambda)-1) =  \frac{1}{\pi}\int_{\C}r(\zeta,z)\overline{e(\zeta)}d\Ree \zeta\, d\Imm \zeta,\\
&\lim_{\lambda \to \infty}\lambda X_1(\lambda,\zeta) = -\frac{1}{2}+\frac{1}{\pi}\int_{\C}r(\eta,z)\overline{X_1(\eta,\zeta)}d\Ree\eta \, d \Imm \eta,\\
&\lim_{\lambda \to \infty}\lambda X_2(\lambda,\zeta) = -\frac{1}{2i}+\frac{1}{\pi}\int_{\C}r(\eta,z)\overline{X_2(\eta,\zeta)}d\Ree\eta \, d \Imm \eta.
\end{align*}
Then, in connection with \eqref{defvu}, we need to take the derivative of \eqref{mu-1} with respect to $\partial / \partial z = \partial_z$. We find:
\begin{gather} \nonumber
\partial_z \mu_{-1} = A + B+C,
\end{gather}
where
\begin{align}
A &= \frac{1}{\pi}\int_{\C}r(\zeta,z)\left[-\frac{i}{2}\sqrt{E}\left(\frac{1}{\zeta}+\bar \zeta\right)\overline{e(\zeta)}+\overline{\partial_{\bar z}e(\zeta)}\right]d\Ree \zeta \, d\Imm \zeta,\\
B &= \frac{1}{2\pi i}\int_{|\zeta|=1}\bigg\{\partial_z K(\zeta)\left[-1+ \frac{1}{\pi}\int_{\C}r(\lambda,z)\overline{\Omega_2(\lambda,\zeta)}d\Ree \lambda \, d \Imm \lambda\right] \\ \nonumber
&\qquad +\frac{K(\zeta)}{\pi} \int_{\C}\bigg[ r(\lambda,z)\bigg(-\frac{i}{2}\sqrt{E}\left(\frac{1}{\lambda}+\bar \lambda\right)\overline{\Omega_2(\lambda,\zeta)} \\ \nonumber
&\qquad +\overline{\partial_{\bar z}\Omega_2(\lambda,\zeta)}\bigg)\bigg]d\Ree \lambda \, d\Imm \lambda \bigg\}d \zeta, \\
C &= -\frac{1}{2\pi i}\int_{|\zeta|=1}\bigg\{\overline{\partial_{\bar z} K(\zeta)}\left[\frac{1}{\pi}\int_{\C}r(\lambda,z)\overline{\Omega_1(\lambda,\zeta)}d\Ree \lambda \, d \Imm \lambda\right] \\ \nonumber
&\qquad +\frac{\overline{K(\zeta)}}{\pi} \int_{\C}\bigg[ r(\lambda,z)\bigg(-\frac{i}{2}\sqrt{E}\left(\frac{1}{\lambda}+\bar \lambda\right)\overline{\Omega_1(\lambda,\zeta)} \\ \nonumber
&\qquad +\overline{\partial_{\bar z}\Omega_1(\lambda,\zeta)}\bigg)\bigg]d\Ree \lambda \, d\Imm \lambda \bigg\}d \bar \zeta.
\end{align}
Now let $v_1, v_2$ be two potential satisfying the assumptions of Theorem \ref{maintheo}. Let $\mu^j_{-1}, r_j, \rho_j,e_j, K_j, \Omega^j_1,\Omega^j_2,X^j_1,X^j_2, A_j,B_j,C_j$ the above-defined functions corresponding to $v_j$, for $j=1,2$. Then
\begin{align}\label{formv}
 v_2(z)-v_1(z) = 2i\sqrt{E}(A_2-A_1 + B_2 - B_1+ C_2-C_1).
\end{align}
In order to estimate $A_2-A_1$, $B_2 - B_1$ and $C_2-C_1$ we will need the following two propositions.

\begin{prop} \label{propK}
Let $v_j$, $j=1,2$, be two potential satisfying the assumptions of Theorem \ref{maintheo}. Then we have, for every $a >1$, $E\geq E_3(N,m)$ and $1<s<2<s'<+\infty$,
\begin{align}
\|K_2-K_1\|_{L^2(T)} &\leq c(N,m)\bigg[\|\rho_2-\rho_1\|_{L^2(T^2)}+\frac{\|r_2-r_1\|_{L^{s,s'}(\C)}}{\sqrt{E}} \\ \nonumber
&\quad + \frac{1}{E}\left(\delta r_a+\frac{a}{a-1}\|r_2-r_1\|_{L^{s,s'}(\C \setminus D_a)}\right) \bigg],
\end{align}
where $\|\cdot\|_{L^{s,s'}} = \|\cdot\|_{L^{s}}+\|\cdot\|_{L^{s'}}$, $T$ is the unit circle and
\begin{align} \label{defda}
D_a = \{\lambda \in \C \, | \, 1/a < |\lambda| < a\},\\
\delta r_a = \sup_{\lambda \in D_a}|r_2(\lambda,z)-r_1(\lambda,z)|.
\end{align}
\end{prop}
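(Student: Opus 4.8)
The plan is to estimate the difference $K_2-K_1$ directly from the defining integral equation \eqref{defK}. First I would set up the equation for $K_j$ as a fixed-point problem on $L^2(T)$: writing \eqref{defK} schematically as $K_j = \mathcal{R}_j\bigl[e_j + \Omega^j_1 K_j + \Omega^j_2 \overline{K_j}\bigr]$, where $\mathcal{R}_j$ denotes integration against $\rho_j(\lambda,\lambda',z)$ over $|\lambda'|=1$. Subtracting the two equations and grouping terms, the difference $K_2-K_1$ satisfies an equation of the form $(I - \mathcal{L})(K_2-K_1) = F$, where $\mathcal{L}$ is a linear operator built from $\rho_2$, $\Omega^2_1$, $\Omega^2_2$ and $F$ collects all the source terms coming from the differences $\rho_2-\rho_1$, $e_2-e_1$, $\Omega^2_i-\Omega^1_i$ and the boundary values of $K_1$. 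The key point will be that for $E$ large the operator norm of $\mathcal{L}$ on $L^2(T)$ is small (strictly less than $1$), so that $(I-\mathcal{L})$ is invertible by Neumann series and $\|K_2-K_1\|_{L^2(T)} \leq c\,\|F\|_{L^2(T)}$.

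The next step is to bound each contribution to $F$. The term involving $\rho_2-\rho_1$ is handled by Proposition \ref{proprho} and the decay estimate \eqref{decrrho}, and will produce the $\|\rho_2-\rho_1\|_{L^2(T^2)}$ term with an $E$-independent constant. The terms involving $e_2-e_1$ and $\Omega^2_i-\Omega^1_i$ require stability estimates for the solutions of the $\bar\partial$-integral equations \eqref{defe}, \eqref{defX1}, \eqref{defX2}: since these are solved in $L^q_0(\C)$ via the Vekua theory and Lemma \ref{lemtech}, the dependence of $e_j, \Omega^j_i$ on $r_j$ is Lipschitz in the relevant norms, so that $\|e_2-e_1\|$ and $\|\Omega^2_i-\Omega^1_i\|$ are controlled by $\|r_2-r_1\|_{L^{s,s'}(\C)}$. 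Crucially, the oscillating exponential in $r(\lambda,z)$ (see \eqref{defr2}) and in $\rho(\lambda,\lambda',z)$ (see \eqref{defrhoz}) contributes factors of $\sqrt{E}$ in the denominator upon integrating over the unit circle against the kernels, which is the source of the gain $1/\sqrt{E}$ on the $r$-difference term and $1/E$ on the last group.

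The most delicate piece, and the reason for splitting $\C$ into $D_a$ and its complement in \eqref{defda}, is the term where the source $F$ involves the difference $r_2-r_1$ integrated against the fixed functions $\overline{\Omega^1_i(\lambda,\zeta)}$ together with the oscillating factor. On the bounded annulus $D_a$ one estimates $r_2-r_1$ by its sup-norm $\delta r_a$, while on $\C\setminus D_a$ one integrates in $L^{s,s'}$ against the $L^{q}$-bounds for $\Omega^1_i$, with the factor $a/(a-1)$ arising from the geometric series controlling $1/(\zeta-\lambda)$-type kernels away from the unit circle. The main obstacle will be to track these two regimes uniformly and to verify that the oscillatory integrals over $T$ genuinely yield the claimed powers of $E^{-1/2}$; this requires the stationary-phase / integration-by-parts type estimates for integrals of the form $\int_T e^{i\sqrt{E}\phi(\zeta)} g(\zeta)\,d\zeta$, which is where condition \eqref{cv2} ($E$ large) is essential both for the contraction and for the decay. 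Assembling the bounds on all pieces of $F$ and applying the Neumann series then gives the stated inequality.
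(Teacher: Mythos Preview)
Your overall architecture---write \eqref{defK} as $(I-\Theta_j)K_j=\int_T\rho_j e_j$, subtract, invert $I-\Theta_2$ by Neumann series for $E$ large, then bound the source terms involving $\rho_2-\rho_1$, $e_2-e_1$, $\Omega_k^2-\Omega_k^1$---matches the paper exactly. The $D_a$ / $\C\setminus D_a$ splitting in the $\Omega$-difference estimate is also the right idea.

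However, your mechanism for the powers of $E$ is wrong, and this is the heart of the proposition. The $1/\sqrt{E}$ and $1/E$ factors do \emph{not} come from stationary phase or any oscillatory-integral argument on $\int_T e^{i\sqrt{E}\phi(\zeta)}g(\zeta)\,d\zeta$. The exponential in $\rho(\lambda,\lambda',z)$ (see \eqref{defrhoz}) has unit modulus for $\lambda,\lambda'\in T$ and contributes nothing. What actually produces the gain is the \emph{pointwise} decay of the amplitude: by Lemma~\ref{lemrho}, $|\rho_j(\lambda,\lambda')|\leq c(1+E|\lambda-\lambda'|^2)^{-m/2}$, and integrating this over $T$ gives
\[
\sup_{\lambda\in T}\|\rho_j(\lambda,\cdot)\|_{L^1(T)}\leq cE^{-1/2}
\]
(this is inequality \eqref{bella} in the paper). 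That is where the contraction $\|\Theta_j\|_{L^2\to L^2}\leq cE^{-1/2}$ comes from, and also the $E^{-1/2}$ in front of $\|e_2-e_1\|_{L^\infty}$. The $E^{-1}$ on the last bracket is then the product of \emph{two} such factors: one $E^{-1/2}$ from $\|\rho_1(\lambda,\cdot)\|_{L^1(T)}$ and a second $E^{-1/2}$ from the bound $\|K_1\|_{L^\infty(T)}\leq cE^{-1/2}$ (itself a consequence of \eqref{corrt}, the same decay of $\rho$, and Lemma~\ref{lem21}). Without identifying this compounding you will not get the stated $1/E$.

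Two smaller points. First, the factor $a/(a-1)$ is not a geometric series; it is simply $\max\bigl(\tfrac{1}{a-1},\tfrac{1}{1-1/a}\bigr)$, the reciprocal of the distance from $T$ to $\partial D_a$, used to bound $1/|\eta-\lambda|$ for $\lambda\in T$ and $\eta\notin D_a$. Second, Proposition~\ref{proprho} is not used here: this proposition bounds $\|K_2-K_1\|$ directly in terms of $\|\rho_2-\rho_1\|_{L^2(T^2)}$, and only later is the latter tied to $\|\Phi_2-\Phi_1\|_*$.
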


\begin{prop}\label{propKK}
Let $v_j$, $j=1,2$, be two potential satisfying the assumptions of Theorem \ref{maintheo}. Then we have, for every $a >1$, $E\geq E_4(N,m)$ and for some $1<s<2<s'<+\infty$,
\begin{align}
&\|\nabla K_2- \nabla K_1\|_{L^2(T)} \leq c(N,m)\Bigg[E^{1/2}\|\rho_2-\rho_1\|_{L^2(T^2)}\\ \nonumber
&\quad +\left\|\left(\frac{1}{|\lambda|}+|\lambda|\right)(r_2 - r_1)\right\|_{L^{s,s'}(\C)}\\ \nonumber
&\quad + E^{-1}\left(\delta r'_a + \frac{a}{a-1}\left\|\left(\frac{1}{|\lambda|}+|\lambda|\right)(r_2 - r_1)\right\|_{L^{s,s'}(\C \setminus D_a)}\right) \Bigg]
\end{align}
where $\nabla$ is taken with respect to $z$, $\|\cdot\|_{L^{s,s'}} = \|\cdot\|_{L^{s}}+\|\cdot\|_{L^{s'}}$, $D_a$ is defined in \eqref{defda} and
\begin{align}
\delta r'_a = \sup_{D_a} \left( \frac{1}{|\lambda|}+|\lambda|\right)|(r_2 - r_1)(\lambda)|.
\end{align}
\end{prop}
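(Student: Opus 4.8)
The plan is to prove Proposition~\ref{propKK} in complete parallel with Proposition~\ref{propK}, replacing every function of $z$ occurring in the representation \eqref{muex}--\eqref{defK} by its $z$-gradient. The whole $z$-dependence sits in the two exponential factors \eqref{defr2} and \eqref{defrhoz}, and the key point is that differentiating them only multiplies by explicit, controllable factors. From \eqref{defr2},
\[
\partial_z r(z,\lambda)=-\tfrac i2\sqrt E\Big(\bar\lambda+\tfrac1\lambda\Big)r(z,\lambda),\qquad
\partial_{\bar z} r(z,\lambda)=-\tfrac i2\sqrt E\Big(\lambda+\tfrac1{\bar\lambda}\Big)r(z,\lambda),
\]
so that $|\partial_z r|+|\partial_{\bar z}r|\le \sqrt E\,\big(\tfrac1{|\lambda|}+|\lambda|\big)\,|r(z,\lambda)|$; and from \eqref{defrhoz}, $\partial_z\rho(\lambda,\lambda',z)=\tfrac{i\sqrt E}{2}(\tfrac1{\lambda'}-\tfrac1\lambda)\rho$ and $\partial_{\bar z}\rho=\tfrac{i\sqrt E}{2}(\lambda'-\lambda)\rho$, both bounded by $\sqrt E\,|\rho|$ on $T\times T$. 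Hence a $z$-derivative falling on an $r$ inserts the weight $(\tfrac1{|\lambda|}+|\lambda|)$ together with one factor $\sqrt E$, while a derivative falling on $\rho$ produces a bare factor $\sqrt E$; this already explains the weight in the $r$-norms and the factor $E^{1/2}$ in front of $\|\rho_2-\rho_1\|_{L^2(T^2)}$ in the statement.

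First I would differentiate the generalised-analytic equations \eqref{defe}, \eqref{defX1}, \eqref{defX2} in $z$ and $\bar z$. By the formulas above, $\partial_z e,\partial_z X_1,\partial_z X_2$ and their $\bar z$-analogues solve inhomogeneous $\bar\partial$-equations whose \emph{left-hand operator is unchanged}, hence still uniquely solvable in $L^q_0(\C)$ by the theory of \cite{V}; their right-hand sides are the original sources multiplied by $-\tfrac i2\sqrt E(\bar\zeta+\tfrac1\zeta)$, plus a term carrying the conjugate derivative. Feeding these into Lemma~\ref{lemtech} (with $1<s<2<s'<\infty$ chosen, as in Proposition~\ref{propK}, so that the pole $1/(\zeta-\lambda)$ and the relation $1/\tilde s=1/s-1/2$ are accommodated) and subtracting the $j=1$ and $j=2$ versions, I obtain $L^{\tilde s}$-bounds for $\partial_z e_2-\partial_z e_1$ and $\partial_z\Omega^2_i-\partial_z\Omega^1_i$ in terms of the weighted norm $\|(\tfrac1{|\lambda|}+|\lambda|)(r_2-r_1)\|_{L^{s,s'}}$, uniformly for large $E$; splitting the $\zeta$-integral along $\C=D_a\cup(\C\setminus D_a)$ and treating the near-pole part pointwise then produces precisely the $\delta r'_a$ and the tail contribution appearing in the statement.

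Next I would differentiate the integral equation \eqref{defK}. Since the derivative falling on the factor $K(\zeta)$ inside \eqref{defK} reproduces the \emph{same} integral operator, the pair $(\partial_zK_j,\partial_{\bar z}K_j)$ solves an $\mathbb R$-linear system $(I+\mathcal T_j)\,\nabla K_j=G_j$ on $L^2(T)$, where $\mathcal T_j$ is built from $\rho_j$ and $\Omega^j$ (undifferentiated) and the source $G_j$ gathers the terms in which the derivative fell on $\rho_j$ or on $e_j,\Omega^j_i$. As $\rho_j$ obeys the decay \eqref{decrrho} and $\Omega^j$ is controlled by the Vekua estimates, the $L^2(T)$-operator norm of $\mathcal T_j$ tends to $0$ as $E\to\infty$ (by the circle-integration estimate used for Lemma~\ref{lemrho} and Proposition~\ref{proprho}), so for $E\ge E_4$ the operator $I+\mathcal T_2$ is boundedly invertible. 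Writing
\[
(I+\mathcal T_2)(\nabla K_2-\nabla K_1)=(G_2-G_1)-(\mathcal T_2-\mathcal T_1)\,\nabla K_1,
\]
the part of $G_2-G_1$ in which the derivative fell on $\rho$ contributes $E^{1/2}\|\rho_2-\rho_1\|_{L^2(T^2)}$ (using that $\nabla K_1$ is a priori bounded on $T$, by Lemma~\ref{lem21}), the term $(\mathcal T_2-\mathcal T_1)\nabla K_1$ contributes lower-order multiples of the same quantities, and the remaining part of $G_2-G_1$, via the previous step and Hölder's inequality \eqref{holder}, contributes the weighted $r$-terms; collecting everything gives the asserted bound.

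The main obstacle is the second step: one must check that $\partial_z e,\partial_z\Omega_i$ genuinely exist and solve the differentiated equations in the correct $L^q_0(\C)$ spaces, and then extract from Lemma~\ref{lemtech} bounds carrying \emph{exactly} the weight $(\tfrac1{|\lambda|}+|\lambda|)$ and the \emph{right} power of $E$—in particular keeping the tail term at order $E^{-1}$, as in Proposition~\ref{propK}, rather than losing a spurious $\sqrt E$. This is precisely where the pointwise and $L^p$ estimates for nonhomogeneous $\bar\partial$-equations with pole singularities are needed, since the differentiated sources are genuinely more singular near $0$ and $\infty$ than the undifferentiated ones of Proposition~\ref{propK}.
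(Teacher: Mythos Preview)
Your overall plan coincides with the paper's approach: differentiate \eqref{defK} in $z$, invert the resulting Fredholm operator for large $E$, and reduce to estimates on the $z$-derivatives of $e_j$ and $\Omega^j_k$ coming from the differentiated versions of \eqref{defe}--\eqref{defX2}. Two points, however, need correction.

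First, the assertion that after differentiating \eqref{defe}, \eqref{defX1}, \eqref{defX2} ``the left-hand operator is unchanged'' is not accurate. Since these equations are $\R$-linear (they contain $\bar e$, $\bar X_k$), differentiation in $z$ produces a term in $\overline{\partial_{\bar z}e}$ (respectively $\overline{\partial_{\bar z}X_k}$), so $\partial_z$- and $\partial_{\bar z}$-derivatives become genuinely \emph{coupled}. The paper handles this by forming the combinations $e^{\pm}=\partial_z e\pm\partial_{\bar z}e$, $X^{\pm}_k=\partial_z X_k\pm\partial_{\bar z}X_k$, $K^{\pm}=\partial_z K\pm\partial_{\bar z}K$, for which the $\bar\partial$-operator is the same as the undifferentiated one up to a sign; only then is the invertibility argument legitimate. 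Without this decoupling your scheme is circular.

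Second, and more importantly, you mislocate the ``pole singularity'' difficulty. The extra weight $(\tfrac{1}{|\lambda|}+|\lambda|)$ coming from $\partial_z r$ is harmless (Lemma~\ref{lemestrr} still gives $r^{\pm}\in L^p_2(\C)$). The genuine new obstacle is that the source term $r^{\pm}_j\,\overline{X^j_k}$ in the $\bar\partial$-equation for $X^{\pm,j}_k$ carries the pole of $X^j_k$ at $\lambda=\zeta\in T$, so that $X^{+,j}_k$ itself has a simple pole on the unit circle. Lemma~\ref{lemtech} cannot be applied directly there (its hypotheses force $q_2\in L^s$ with $s<2$). The paper overcomes this by a modified Cauchy transform
\[
\partial_{\bar\lambda}^{-1}f(\lambda,\zeta)=-\frac{\zeta-\lambda}{\pi}\int_{\C}\frac{f(\eta)}{(\eta-\lambda)(\zeta-\eta)}\,d\Ree\eta\,d\Imm\eta
\]
and nonlinear representation formulas for $X^{+,j}_k$ that factor out the pole explicitly (a generalisation of \cite[Ch.~III, (7.3)]{V}); only after this does one recover the bound $|X^{+,j}_k(\lambda,\zeta)|\le cE^{-(m-1)/2}|\zeta-\lambda|^{-2/p}$ and can proceed with the $D_a/(\C\setminus D_a)$ splitting. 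Your proposal should flag this step, not the behaviour near $0$ and $\infty$, as the place where the new $\bar\partial$-estimates are required.
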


\begin{proof}[Proof of Proposition \ref{propK}]
We rewrite integral equation \eqref{defK} for $K_j$ as follows:
\begin{equation} \label{eqK}
 (I-\Theta_j)K_j(\lambda)=\int_{|\lambda'|=1}\rho_j(\lambda,\lambda',z)e_j(\lambda')|d\lambda'|,
\end{equation}
where
\begin{align}
 \Theta_j f (\lambda) &= \frac{1}{2\pi i}\int_{|\lambda'|=1}\rho_j(\lambda,\lambda',z)\bigg[\int_{|\zeta|=1}\Omega^j_1(\lambda'(1+0),\zeta)f(\zeta)d\zeta\\ \nonumber
&\quad +\Omega^j_2(\lambda',\zeta)\overline{f(\zeta)}d\bar\zeta\bigg]|d\lambda'|, \quad j=1,2,
\end{align}
for $f\in L^2(T)$. Subtracting equation \eqref{eqK} for $j=2$ and $j=1$ gives
\begin{align} \label{eqkdiff}
&(I-\Theta_2)(K_2-K_1)(\lambda) = (\Theta_2-\Theta_1)K_1(\lambda)\\ \nonumber
&\quad+ \int_{|\lambda'|=1}(\rho_2-\rho_1)(\lambda,\lambda',z)e_2(\lambda')+\rho_1(\lambda, \lambda',z)(e_2-e_1)(\lambda')|d\lambda'|.
\end{align}
We will now use some results of \cite{N3} and \cite[Ch. III]{V}. Let $L^p_{\nu}(\C)$ the set
defined in \eqref{deflpnu}. From Lemma \ref{lemb} we have that $r_j \in L^p_{\nu}(\C)$ for all $\nu < m$. In particular $\|r_j\|_{L^p_{2}(\C)}\leq c(N,p)E^{-m/2}$, for $m >2$, $p \geq 1$.
Then we following estimates holds
\begin{align} \label{estomega1}
\left|\Omega_1^j(\lambda,\zeta)-\frac{1}{\zeta-\lambda}\right| &\leq c(N,m,p)E^{-m/2}\frac{1}{|\zeta-\lambda|^{2/p}},\\ \label{estomega2}
\left|\Omega_2^j(\lambda,\zeta)\right| &\leq c(N,m,p)E^{-m/2}\frac{1}{|\zeta-\lambda|^{2/p}},
\end{align}
since $\|r_j\|_{L^p_{2}(\C)} \leq c(N,m,p)E^{-m/2}$, $j=1,2$. These estimates are proved in \cite[Ch. III, \S 8]{V}. We also recall the following classical inequality:
\begin{align}
&\|C_{\pm}u\|_{L^p(T)} \leq c(p)\|u\|_{L^p(T)}, \quad 1 < p < +\infty,\\
&(C_{\pm}u)(\lambda) = \frac{1}{2\pi i}\int_T \frac{u(\zeta)}{\zeta-\lambda(1\mp 0)}d\zeta.
\end{align}
Then we have, for $p >4$,
\begin{align*}
\|\Theta_j f\|_{L^2(T)} &\leq c(r_0,p) \sup_{\lambda \in T}\|\rho_j(\lambda,\cdot,z)\|_{L^1(T)}\left(1+\|1/|\cdot - \lambda|^{2/p}\|_ {L^2(\C)}\right) \|f\|_{L^2(T)}\\
& \leq \frac{c(r_o,p)}{\sqrt{E}}\|f\|_{L^2(T)},
\end{align*}
where we used the fact that $\rho_j$ satisfies inequality \eqref{decrh} and
\begin{align}\label{bella}
\int_T(1+E|\lambda-\lambda'|^2)^{-m/2}|d\lambda'| \leq c E^{-1/2}.
\end{align}
Then for $E \geq E_4(r_0,m)$ we can solve equation \eqref{eqkdiff} by iteration in $L^2(T)$ and find
\begin{align} \label{estkkk}
\|K_2-K_1\|_{L^2(T)} &\leq c(N,m)\bigg( \|(\Theta_2-\Theta_1)K_1\|_{L^2(T)}\\ \nonumber
&\quad +\|\rho_2-\rho_1\|_{L^2(T^2)}+\frac{\|e_2-e_1\|_{L^{\infty}(T)}}{\sqrt{E}}\bigg),
\end{align}
where we used the $L^{\infty}(T)$-boundedness of $e_2$ (which follows from considerations at the beginning of this section). We have that 
\begin{align}
\|e_2-e_1\|_{L^{\infty}(T)} \leq c(N,m)\left(\|r_2-r_1\|_{L^s(\C)} +\|r_2-r_1\|_{L^{s'}(\C)}\right),
\end{align}
for $1<s<2<s'<+\infty$. Indeed, this follows from the integral equation
\begin{align*}
e_2(\lambda)-e_1(\lambda) = -\frac{1}{\pi}\int_{\C}\frac{(r_2-r_1)(\zeta,z)\overline{e_2(\zeta)}}{\zeta-\lambda}+\frac{r_2(\zeta,z)\overline{(e_2-e_1)(\zeta)}}{\zeta-\lambda}d\Ree \zeta\, d\Imm \zeta,
\end{align*}
which is a consequence of \eqref{defe}, from H\"older's inequality, Lemma \ref{lemb}, estimate \eqref{estmula1} and the $L^{\infty}$-boundedness of $e_j(\lambda)$ (see the beginning of this section and Section \ref{secpf} for more details).

The first term of the right hand side of \eqref{estkkk}, $(\Theta_2-\Theta_1)K_1$, can be written as
\begin{align} \nonumber
(\Theta_2-\Theta_1)K_1&=\frac{1}{2\pi i}\int_T (\rho_2-\rho_1)\int_{T}\Omega_1^2 K_1 d\zeta+\Omega_2^2\overline{K_1}d\bar \zeta |d\lambda'|\\ \nonumber
&\quad + \frac{1}{2\pi i}\int_T \rho_1\int_{T}(\Omega_1^2-\Omega_1^1) K_1 d\zeta+(\Omega_2^2-\Omega_2^1)\overline{K_1}d\bar \zeta |d\lambda'|,
\end{align}
where we dropped the dependence on every variable for simplicity's sake. We obtain
\begin{align}
&\|(\Theta_2-\Theta_1)K_1\|_{L^2(T)}\leq c(N,m)E^{-1/2}\Big(\|\rho_2-\rho_1\|_{L^2(T^2)} \\ \nonumber
&\quad + E^{-1/2}\sup_{\lambda \in T}\left(\|(\Omega_1^2-\Omega_1^1)(\lambda,\cdot)\|_{L^{2}(T)}+ \|(\Omega_2^2-\Omega_2^1)(\lambda,\cdot)\|_{L^{2}(T)}\right) \Big),
\end{align}
where we used the fact that 
\begin{equation} \label{estK1}
\|K_j\|_{L^{\infty}(T)}\leq c(N,m)E^{-1/2},
\end{equation}
which follows from the first equality in \eqref{defK}, equation \eqref{corrt}, inequality \eqref{decrh} for $\rho_j$, Lemma \ref{lem21} and estimate \eqref{bella}.

We now need to estimate the difference of $\Omega^j_k$. The $X^j_k$ satisfy
\begin{align} \label{dbarx21}
\partial_{\bar \lambda}\left(X^2_k-X^1_k\right)(\lambda,\zeta)&=r_1(\lambda,z)\overline{(X^2_k-X^1_k)(\lambda,\zeta)}\\ \nonumber
&\quad +(r_2-r_1)(\lambda,z)\overline{X^2_k}(\lambda,\zeta),
\end{align}
for $k=1,2$. Note that the last equation holds over all the complex plane, since $X^2_k-X^1_k$ has no singularity. Moreover $(X^2_k-X^1_k)(\cdot,\zeta) \in L^p(\C)$, for every $p>2$, thanks to properties of the integral operator in \eqref{defX1}, \eqref{defX2}, summarized in Section \ref{secpf} (see \eqref{estdbarr} for instance). Then we may define, following \cite{V},
\begin{align}
w_k(\lambda,\zeta) = \partial_{\bar \lambda}^{-1} \left(r_1(\cdot,z)\frac{\overline{(X^2_k-X^1_k)}(\cdot,\zeta)}{(X^2_k-X^1_k)(\cdot,\zeta)}\right)(\lambda),\\ \label{defdbarr}
\partial_{\bar \lambda}^{-1}f(\lambda) = -\frac{1}{\pi}\int_{\C}\frac{f(\eta)}{\eta-\lambda}d\Ree \eta\,d\Imm \eta.
\end{align}
We have that $\|w_k\|_{L^{\infty}(\C)}\leq c\left(\|r_1\|_{L^s(\C)}+\|r_1\|_{L^{s'}(\C)}\right)$, for any fixed $1<s<2<s'<+\infty$. Thus we have the following representation formula
\begin{align}
X^2_k-X^1_k=e^{w_k}\partial_{\bar \lambda}^{-1}\left(e^{-w_k}(r_2-r_1)\overline{X^2_k}\right),
\end{align}
which yields
\begin{align} \label{difxk}
&|(X^2_k-X^1_k)(\lambda,\zeta)|\leq c(N,m)\int_{\C}\frac{|(r_2-r_1)(\eta,z)||X^2_k(\eta,\zeta)|}{|\eta -\lambda|}d\Ree \eta \, d\Imm \eta\\ \nonumber
&\quad \leq c(N,m)\int_{\C}\left(\frac{|(r_2-r_1)(\eta,z)|}{|\eta -\lambda||\zeta-\eta|}+\frac{|(r_2-r_1)(\eta,z)|}{|\eta -\lambda||\zeta-\eta|^{2/p}}\right)d\Ree \eta \, d\Imm \eta,
\end{align}
for $\lambda,\zeta \in T$, where we used estimates \eqref{estomega1} and \eqref{estomega2}. Now take $a >1$ and define
\begin{align}
D_a = \{\lambda \in \C \, | \, 1/a < |\lambda| < a\},\\
\delta r_a = \sup_{\lambda \in D_a}|r_2(\lambda,z)-r_1(\lambda,z)|.
\end{align}
We then estimate the last integral in \eqref{difxk} on $D_a$ and on $\C \setminus D_a$, like in \cite[Theorem 6.1]{N3}, and we obtain, using H\"older's inequality,
\begin{align}
&|(X^2_k-X^1_k)(\lambda,\zeta)|\leq c(N,m)\left(\delta r_a |\log|\lambda-\zeta||+\frac{a}{a-1}\|r_2-r_1\|_{L^{s,s'}(\C \setminus D_a) }\right),
\end{align}
for $\lambda, \zeta \in T$, $1<s<2<s'<+\infty$, since $\frac{a}{a-1}= \max(\frac{1}{a-1}, \frac{1}{1-1/a})$. We then get
\begin{equation}
\sup_{\lambda \in T}\|\Omega_k^2(\lambda,\cdot)-\Omega_k^1(\lambda,\cdot)\|_{L^2(T)}\leq c(N,m)\left(\delta r_a+\frac{a}{a-1}\|r_2-r_1\|_{L^{s,s'}(\C \setminus D_a)}\right).
\end{equation}
First this yields 
\begin{align} \label{esttheta}
&\|(\Theta_2-\Theta_1)K_1\|_{L^2(T)}\leq c(N,m)E^{-1/2}\bigg[\|\rho_2-\rho_1\|_{L^2(T^2)} \\ \nonumber
&\quad + E^{-1/2}\left(\delta r_a+\frac{a}{a-1}\|r_2-r_1\|_{L^{s,s'}(\C \setminus D_a)}\right) \bigg],
\end{align}
and finally
\begin{align*}
\|K_2-K_1\|_{L^2(T)} &\leq c(N,m)\bigg[\|\rho_2-\rho_1\|_{L^2(T^2)}+\frac{\|r_2-r_1\|_{L^{s,s'}(\C)}}{\sqrt{E}} \\ \nonumber
&\quad + \frac{1}{E}\left(\delta r_a+\frac{a}{a-1}\|r_2-r_1\|_{L^{s,s'}(\C \setminus D_a)}\right) \bigg],
\end{align*}
which ends the proof of Proposition \ref{propK}.
\end{proof}

\begin{proof}[Proof of Proposition \ref{propKK}]
We derive integral equation \eqref{defK} for $K_j$ with respect to $\partial_z$ and $\partial_{\bar z}$ and we obtain two coupled integral equations for $\partial_z K_j$ and $\partial_{\bar z}K_j$. Thus we define 
\begin{equation}
K^{\pm}_j= \partial_z K_j \pm \partial_{\bar z}K_j, \quad j=1,2,
\end{equation}
which satisfy
\begin{align} \label{intderK}
&(I-\Theta^{\pm}_j)K^{\pm}_j(\lambda)=\int_{|\lambda'|=1}\rho^{\pm}_j(\lambda,\lambda',z)\Big[e_j(\lambda')\\ \nonumber
&\quad +\frac{1}{2\pi i}\int_{|\zeta|=1}\Omega^j_1(\lambda'(1+0),\zeta)K_j(\zeta)d\zeta +\Omega^j_2(\lambda',\zeta)\overline{K_j(\zeta)}d\bar\zeta\Big]|d\lambda'|\\ \nonumber
&\quad \int_{|\lambda'|=1}\rho_j(\lambda,\lambda',z)\Big[e^{\pm}_j(\lambda')+\frac{1}{2\pi i}\int_{|\zeta|=1}\Omega^{\pm,j}_1(\lambda'(1+0),\zeta)K_j(\zeta)d\zeta\\ \nonumber &\quad +\Omega^{\pm,j}_2(\lambda',\zeta)\overline{K_j(\zeta)}d\bar\zeta\Big]|d\lambda'|, \quad j=1,2,
\end{align}
where
\begin{align}
 \Theta^{\pm}_j f (\lambda) &= \frac{1}{2\pi i}\int_{|\lambda'|=1}\rho_j(\lambda,\lambda',z)\bigg[\int_{|\zeta|=1}\Omega^j_1(\lambda'(1+0),\zeta)f(\zeta)d\zeta\\ \nonumber
&\quad \pm \Omega^j_2(\lambda',\zeta)\overline{f(\zeta)}d\bar\zeta\bigg]|d\lambda'|,\\ 
\rho^{\pm}_j &= \partial_z \rho_j \pm \partial_{\bar z}\rho_j,  \qquad e^{\pm}_j = \partial_z e_j \pm \partial_{\bar z} e_j\\ 
\Omega^{\pm,j}_k &=  \partial_z \Omega^j_k \pm \partial_{\bar z}\Omega^j_k,
\end{align}
for $j=1,2$, $k=1,2$. Integral equations \eqref{intderK} are obtained by adding and subtracting the two above-mentioned coupled integral equations for $\partial_z K_j$ and $\partial_{\bar z}K_j$ (which are obtained from \eqref{defK}).

We then subtract \eqref{intderK} for $j=1$ from $j=2$ and we get
\begin{align*}
&(I-\Theta^{\pm}_2)(K^{\pm}_2-K^{\pm}_1)=(\Theta^{\pm}_2-\Theta^{\pm}_1)K^{\pm}_1\\ \nonumber
&\quad + \int_{|\lambda'|=1}(\rho^{\pm}_2-\rho^{\pm}_1)\Big[e_2 +\frac{1}{2\pi i}\int_{|\zeta|=1}\Omega^2_1 K_2 d\zeta +\Omega^2_2\overline{K_2}d\bar\zeta\Big]|d\lambda'|\\ \nonumber
&\quad + \int_{|\lambda'|=1}\rho^{\pm}_1\Big[e_2-e_1+\frac{1}{2 \pi i}\int_{|\zeta|=1}\left( (\Omega^2_1-\Omega^1_1) K_2 + \Omega^1_1(K_2-K_1)\right) d\zeta \\ \nonumber
&\quad +\left( (\Omega^2_2-\Omega^1_2)\bar{K_2}+\Omega^1_2 (\bar K_2 - \bar K_1)\right) d\bar\zeta\Big]|d\lambda'|\\
&\quad + \int_{|\lambda'|=1}(\rho_2-\rho_1)\Big[e^{\pm}_2 +\frac{1}{2\pi i}\int_{|\zeta|=1}\Omega^{\pm,2}_1 K_2 d\zeta +\Omega^{\pm,2}_2\bar{K_2}d\bar\zeta\Big]|d\lambda'| \\
&\quad + \int_{|\lambda'|=1}\rho_1\Big[e^{\pm}_2-e^{\pm}_1+\frac{1}{2 \pi i}\int_{|\zeta|=1}\left( (\Omega^{\pm,2}_1-\Omega^{\pm,1}_1) K_2 + \Omega^{\pm,1}_1(K_2-K_1)\right) d\zeta \\ \nonumber
&\quad +\left( (\Omega^{\pm,2}_2-\Omega^{\pm,1}_2)\bar{K_2}+\Omega^{\pm,1}_2 (\bar K_2 - \bar K_1)\right) d\bar\zeta\Big]|d\lambda'|,
\end{align*}
where we dropped the dependence on every variable for simplicity's sake. The operator $\Theta^{\pm}_j$ satisfies the same estimates of operator $\Theta_j$. Then, for $E \geq E_4(N,m)$ the last equation is solvable by iteration in $L^2(T)$ and we have
\begin{align*}
&\|K^{\pm}_2-K^{\pm}_1\|_{L^2(T)}\leq c(N)\Bigg( \|(\Theta^{\pm}_2-\Theta^{\pm}_1)K^{\pm}_1\|_{L^2(T)}\\ \nonumber
&\quad + \|\rho^{\pm}_2-\rho^{\pm}_1\|_{L^2(T^2)} + \sup_{\lambda \in T}\| \rho^{\pm}_1(\lambda,\cdot) \|_{L^1(T)} \Big[\|e_2-e_1\|_{L^{\infty}(T)}\\ \nonumber
&\quad +\|K_2-K_1\|_{L^2(T)} + \frac{1}{\sqrt{E}}( \sup_{\lambda \in T}\|(\Omega^2_1-\Omega^1_1)(\lambda,\cdot)\|_{L^2(T)}\\ 
&\quad +\sup_{\lambda \in T}\|(\Omega^2_2-\Omega^1_2)(\lambda,\cdot)\|_{L^2(T)} )\Big]\\
&\quad + \|\rho_2-\rho_1\|_{L^2(T^2)}\left[\|e^{\pm}_2\|_{L^{\infty}(T)} +\left\|\int_{|\zeta|=1}\Omega^{\pm,2}_1 K_2 d\zeta +\Omega^{\pm,2}_2\bar{K_2}d\bar\zeta \right\|_{L^2(T)}\right] \\
&\quad + \sup_{\lambda \in T}\|\rho_1(\lambda,\cdot)\|_{L^1(T)}\Bigg[\|e^{\pm}_2-e^{\pm}_1\|_{L^{\infty}(T)}\\ \nonumber
&\quad +\frac{1}{2 \pi i}\bigg\| \int_{|\zeta|=1}\left( (\Omega^{\pm,2}_1-\Omega^{\pm,1}_1) K_2 + \Omega^{\pm,1}_1(K_2-K_1)\right) d\zeta \\ \nonumber
&\quad +\left( (\Omega^{\pm,2}_2-\Omega^{\pm,1}_2)\bar{K_2}+\Omega^{\pm,1}_2 (\bar K_2 - \bar K_1)\right) d\bar\zeta \bigg\|_{L^2(T)} \Bigg] \Bigg),
\end{align*}
where we used the $L^{\infty}$-boundedness of $e_j$ (see the beginning of this section) and estimates \eqref{estomega1}, \eqref{estomega2} as in the proof of Proposition \ref{propK}.

Since the kernels of $\Theta_j$ and $\Theta^{\pm}_j$ differ only by a sign, estimate \eqref{esttheta} yields
\begin{align}
&\|(\Theta^{\pm}_2-\Theta^{\pm}_1)K^{\pm}_1\|_{L^2(T)} \leq c(N,m)E^{-1/2}\bigg[\|\rho_2-\rho_1\|_{L^2(T^2)} \\ \nonumber
&\quad + E^{-1/2}\left(\delta r_a+\frac{a}{a-1}\|r_2-r_1\|_{L^{s,s'}(\C \setminus D_a)}\right) \bigg],
\end{align}
where we used the fact that 
\begin{equation} \label{estkpm}
\|K^{\pm}_j\|_{L^{\infty}(T)}\leq c(N,m) E^{-1/2}.
\end{equation}
Indeed this follows from the first equality in \eqref{defK}, equation \eqref{corrt} (derived with respect to $\partial_z$ and $\partial_{\bar z}$), inequality \eqref{decrh} for $\rho_j$, Lemma \ref{lem21} and estimate
\begin{align}
\int_T \sqrt{E}|\lambda-\lambda'|(1+E|\lambda-\lambda'|^2)^{-m/2}|d\lambda'| \leq c E^{-1/2}.
\end{align}
This inequality, as well as \eqref{decrh}, also implies
\begin{equation}
\sup_{\lambda \in T}\| \rho^{\pm}_1(\lambda,\cdot) \|_{L^1(T)} \leq c(N,m) E^{-1/2}.
\end{equation}
We also have
\begin{align}
\|\rho^{\pm}_2-\rho^{\pm}_1\|_{L^2(T^2)} \leq c \sqrt{E}\|\rho_2-\rho_1\|_{L^2(T^2)},
\end{align}
which follows from the definition of $\rho(\lambda,\lambda',z)$ in \eqref{defrhoz}, and
\begin{equation}
 \sup_{\lambda \in T}\| \rho_1(\lambda,\cdot) \|_{L^1(T)} \leq c(N,m) E^{-1/2}.
\end{equation}
In order to estimate the difference of $e^{\pm}_j$ we proceed as follows. From \eqref{defe} we have that $e^{\pm}_j$ satisfies the following integral equation
\begin{align}
e^{\pm}_j(\lambda) = -\frac{1}{\pi}\int_{\C}\left(\frac{r_j^{\pm}(\zeta,z)\bar e_j(\zeta)}{\zeta-\lambda}\pm \frac{r_j(\zeta,z)\bar e^{\pm}_j(\zeta)}{\zeta-\lambda} \right)d\Ree \zeta \, d\Imm \zeta,
\end{align}
which gives
\begin{align}
 &(e^{\pm}_2-e^{\pm}_1)(\lambda) = -\frac{1}{\pi}\int_{\C}\bigg(\frac{(r_2^{\pm}-r_1^{\pm})(\zeta,z)\bar e_2(\zeta)+r_1^{\pm}(\zeta,z)(\bar e_2-\bar e_1)(\zeta)}{\zeta-\lambda} \\ \nonumber
&\quad \pm \frac{(r_2-r_1)(\zeta,z)\bar e^{\pm}_2(\zeta) +r_1(\zeta,z)(\bar e^{\pm}_2-\bar e^{\pm}_1)(\zeta)}{\zeta-\lambda} \bigg) d\Ree \zeta \, d\Imm \zeta.
\end{align}
Using several times H\"older's inequality as well as estimates \eqref{estmula1}, \eqref{estmula2} (see Section \ref{secpf}), Lemma \ref{lemestrr}, definition \eqref{defr2} and the $L^{\infty}$ boundedness of $e_j, e^{\pm}_j$, we obtain
\begin{align}
 \|e_2^{\pm}-e_1^{\pm}\|_{L^{\infty}(T)} &\leq c(N,m)\bigg(\|r_2-r_1\|_{L^{s,s'}(\C)} \\ \nonumber 
&\quad + \sqrt{E}\left\|\left(\frac{1}{|\lambda|}+|\lambda|\right)|r_2-r_1|\right\|_{L^{s,s'}(\C)}\bigg),
\end{align}
for $1<s<2<s'< \infty$. 

We now pass to the estimates of the $\Omega^{\pm,j}_k$. Define
\begin{align} \label{defxpm}
X^{\pm,j}_k &=  \partial_z X^j_k \pm \partial_{\bar z}X^j_k, \qquad j=1,2, \quad k=1,2.
\end{align}
From definitions \eqref{defX1}, \eqref{defX2} we have that $X^{\pm,j}_k$ satisfy the following non-homogeneous $\bar \partial$ equations
\begin{align} \label{eqxpm}
\partial_{\bar \lambda}X^{\pm,j}_k = \pm r_j \overline{X^{\pm,j}_k}+r_j^{\pm}\overline{X^j_k},
\end{align}
where $X^{-,j}_k$ has no singularities while $X^{+,j}_k$ has a pole at $\lambda = \zeta$. More precisely
\begin{align}
\lim_{\lambda \to \zeta}(\lambda-\zeta)X^{+,j}_1(\lambda,\zeta) = 1, \qquad \lim_{\lambda \to \zeta}(\lambda-\zeta)X^{+,j}_2(\lambda,\zeta) = \frac 1 i.
\end{align}
We will now estimate the $X^{\pm,j}_k$ using an argument of Vekua \cite[Ch. III, \S 7-8]{V}. Consider the following inverse of $\partial_{\bar \lambda}$:
\begin{align}
\partial_{\bar \lambda}^{-1}f(\lambda, \zeta) = -\frac{\zeta-\lambda}{\pi}\int_{\C}\frac{f(\eta)}{(\eta-\lambda)(\zeta-\eta)}d\Ree \eta\,d\Imm \eta,
\end{align}
defined for $f \in L^p_2(\C)$. It satisfies the following inequalities
\begin{align} \label{estdbar1}
|\partial_{\bar \lambda}^{-1}f(\lambda, \zeta)|&\leq c(p)\|f\|_{L^p_2(\C)},\\ \label{estdbar2}
|\partial_{\bar \lambda}^{-1}f(\lambda, \zeta)|&\leq c(p)\|f\|_{L^p_2(\C)}|\lambda-\zeta|^{1-2/p},
\end{align}
which are proved in \cite[Ch. III, \S 4]{V}. Let
\begin{align}
w^{\pm,j}_k(\lambda,\zeta)= \partial_{\bar \lambda}^{-1}\left(\pm r_j(\cdot,z)\frac{\overline{ X^{\pm,j}_k}(\cdot,\zeta)}{ X^{\pm,j}_k(\cdot,\zeta)} \right)(\lambda,\zeta).
\end{align}
We first consider $X^{-,j}_k$. Since it has no singularity, we can argue as in the proof of Proposition \ref{propK}, and find the representation formula
\begin{align}
X^{-,j}_k(\lambda,\zeta) = e^{w^{-,j}_k(\lambda,\zeta)}\partial_{\bar \lambda}^{-1}\left(e^{-w^{-,j}_k(\cdot,\zeta)}\left(r^-_j(\cdot,z) \overline{X^j_k}(\cdot,\zeta)\right) \right)(\lambda,\zeta),
\end{align}
which yields
\begin{equation} \label{estx-}
|X^{-,j}_k(\lambda,\zeta)|\leq c(N,m)E^{-(m-1)/2},
\end{equation}
for $\lambda \in \C$, $\zeta \in T$. Indeed this follows from \eqref{estdbar1}, the boundedness of $\|r_j\|_{L^p_2(\C)}$ and the fact that
\begin{align} \label{estrpmp}
\|r^{\pm}(\cdot,z)\|_{L^p_2(\C)} &\leq c\sqrt{E}\|(1+|\lambda|^{2-2/p})r(\cdot,z)\|_{L^p(\C)} \\ \nonumber
&\leq  c(N,m)E^{-(m-1)/2},
\end{align}
for $p \geq 1$. We also used H\"older inequality and the estimates
\begin{align} \label{estx1}
\left|X_1^j(\lambda,\zeta)-\frac{1}{2(\zeta-\lambda)}\right| &\leq c(N,p)\frac{1}{|\zeta-\lambda|^{2/p}},\\ \label{estx2}
\left|X_2^j(\lambda,\zeta)-\frac{1}{2i(\zeta-\lambda)}\right| &\leq c(N,p)\frac{1}{|\zeta-\lambda|^{2/p}},
\end{align}
which follow from \eqref{estomega1} and \eqref{estomega2}.

For $X^{+,j}_k$ the two representation formulas hold:
\begin{align}\label{reprX+1}
X^{+,j}_1(\lambda,\zeta) &=\frac{ e^{w^{+,j}_1(\lambda,\zeta)}\partial_{\bar \lambda}^{-1}\left(e^{-w^{+,j}_1(\cdot,\zeta)}\left(r^+_j(\cdot,z) (\zeta-\cdot)\overline{X^j_1}(\cdot,\zeta)\right) \right)(\lambda,\zeta)}{\zeta-\lambda},\\ \label{reprX+2}
X^{+,j}_2(\lambda,\zeta) &= \frac{e^{w^{+,j}_2(\lambda,\zeta)}\partial_{\bar \lambda}^{-1}\left(e^{-w^{+,j}_2(\cdot,\zeta)}\left(r^+_j(\cdot,z) i(\zeta-\cdot)\overline{X^j_2}(\cdot,\zeta)\right) \right)(\lambda,\zeta)}{i(\zeta-\lambda)}.
\end{align}
These formulas (non-linear integral equations) are some sort of generalisations of the non-linear integral equation (7.3) in \cite[Ch. III]{V} and may be generalised to solutions of non-homogeneous $\bar \partial$ equations with arbitrary prescribed (analytic) singularities.

To prove \eqref{reprX+1} we proceed as follows. We define $ X' (\lambda,\zeta) = (\zeta-\lambda)X^{+,j}_1(\lambda,\zeta)$, which is continuous and satisfies
\begin{align}
\partial_{\bar \lambda}X' (\lambda,\zeta)= r_j\frac{\zeta-\lambda}{\bar \zeta - \bar \lambda} \overline{X' (\lambda,\zeta)}+r^+_j(\lambda,z)(\zeta-\lambda)\overline{X^j_1}(\lambda,\zeta).
\end{align}
Then we have
\begin{equation}
\partial_{\bar \lambda}(e^{-w^{+,j}_1(\lambda,\zeta)}X' (\lambda,\zeta)) = e^{-w^{+,j}_1(\lambda,\zeta)} r^+_j(\lambda,z)(\zeta-\lambda)\overline{X^j_1}(\lambda,\zeta).
\end{equation}
It is then possible to apply $\partial_{\bar \lambda}$ since we have estimates \eqref{estrpmp}-\eqref{estx2}, which guarantees that the right hand side is in $L^p_2(\C)$, for $\zeta \in T$. The proof of \eqref{reprX+2} is completely analogous.

From \eqref{reprX+1}, \eqref{reprX+2}, as well as \eqref{estrpmp}-\eqref{estx2} and \eqref{estdbar2} we find
\begin{align}\label{estx+}
|X^{+,j}_k(\lambda,\zeta)| \leq c(N,m)E^{-(m-1)/2}|\zeta -\lambda|^{-2/p},
\end{align}
for $\lambda \in \C$, $\zeta \in T$, $j=1,2$, $k=1,2$, $p >2$. To summarize, we have obtained:
\begin{align} \label{estomega+}
|\Omega^{-,j}_k(\lambda,\zeta)|&\leq c(N,m)E^{-(m-1)/2}, \\ \label{estomega-}
|\Omega^{+,j}_k(\lambda,\zeta)|&\leq c(N,m)E^{-(m-1)/2}|\zeta -\lambda|^{-2/p},
\end{align}
for $\lambda \in \C$, $\zeta \in T$, $j=1,2$, $k=1,2$, $p >2$.

We can now estimate the difference $\Omega^{\pm,2}_k-\Omega^{\pm,1}_k$ using similar arguments. The functions $X^{\pm,2}_k-X^{\pm,1}_k$ are continuous and satisfy
\begin{align} \label{dbarpm}
&\partial_{\bar \lambda}(X^{\pm,2}_k-X^{\pm,1}_k)= \pm r_1(\bar X^{\pm,2}_k- \bar X^{\pm,1}_k)\\ \nonumber
&\quad +(r^{\pm}_2-r^{\pm}_1)\bar X^2_k+r_1^{\pm}(\bar X^{2}_k- \bar X^{1}_k)\pm(r_2-r_1)\bar X^{\pm,2}_k,
\end{align}
for $\lambda , \zeta \in \C$, $j=1,2$, $k=1,2$. Using the $\partial_{\bar \lambda}^{-1}$ defined in \eqref{defdbarr} and arguing as above we find
\begin{align}
|(X^{\pm,2}_k-X^{\pm,1}_k)(\lambda,\zeta)| \leq c(N)(J_1(\lambda,\zeta)+J_2(\lambda,\zeta) + J_3(\lambda,\zeta)),
\end{align}
where
\begin{align}
J_1(\lambda,\zeta) &= \int_{\C}\frac{|(r^{\pm}_2-r^{\pm}_1)(\eta,z)||X^2_k(\eta,\zeta)|}{|\eta -\lambda|}d\Ree \eta \, d\Imm \eta,\\ 
J_2(\lambda,\zeta) &= \int_{\C}\frac{|r^{\pm}_1(\eta,z)||(X^2_k-X^1_k)(\eta,\zeta)|}{|\eta -\lambda|}d\Ree \eta \, d\Imm \eta,\\ 
J_3(\lambda,\zeta) &= \int_{\C}\frac{|(r_2-r_1)(\eta,z)||X^{\pm,2}_k(\eta,\zeta)|}{|\eta -\lambda|}d\Ree \eta \, d\Imm \eta.
\end{align}
For $J_1$ and $J_3$ we argue as in the proof of Proposition \ref{propK}. We find, for $a >1$, $p >2$,
\begin{align} \label{estj1}
J_1(\lambda,\zeta) &\leq c(N,m)\Bigg(\delta r'_a |\log|\zeta-\lambda|| \\ \nonumber
&\quad + \frac{a}{a-1}\left\|\left(\frac{1}{|\lambda|}+|\lambda|\right)(r_2 - r_1)\right\|_{L^{s,s'}(\C \setminus D_a)}\Bigg),
\end{align}
for $\lambda, \zeta \in T$, where $\delta r'_a = \sup_{D_a} \left( \frac{1}{|\lambda|}+|\lambda|\right)|(r_2 - r_1)(\lambda)|$, $1<s<2<s'<+\infty$. Using \eqref{estx+} for $p >4$ and \eqref{estx-}, we obtain for both "$\pm$" cases
\begin{align} \label{estj3}
J_3(\lambda,\zeta) \leq c(N,m)E^{-(m-1)/2}\left(\delta r_a + \frac{a}{a-1}\left\|r_2 - r_1\right\|_{L^{s,s'}(\C \setminus D_a)}\right),
\end{align}
for $\lambda, \zeta \in T$.

In order to estimate $J_2$ we start with H\"older's inequality for $q >2$, $1/p+1/q =1$:
\begin{align}
J_2(\lambda,\zeta) &\leq \left\|\frac{r^{\pm}_1(\cdot,z)}{|\cdot-\lambda|}\right\|_{L^p(\C)} \|(X^2_k-X^1_k)(\cdot,\zeta)\|_{L^q(\C)}\\ \nonumber
& \leq c(N,m,q)E^{-(m-1)/2}\|(X^2_k-X^1_k)(\cdot,\zeta)\|_{L^q(\C)},
\end{align}
since we can find $r,r'$ with $1<r'<2<r<+\infty$ such that $1/r + 1/r' = 1/p$ (note that $p<2$) and thus
\begin{align*}
\left\|\frac{r^{\pm}_1(\cdot,z)}{|\cdot-\lambda|}\right\|_{L^q(\C)}\! \! \! \! &\leq \| r^{\pm}_1(\cdot,z)\|_{L^r(|\eta|<R)}\|1/ |\cdot-\lambda|\|_{L^{r'}(|\eta|<R)}\\
&\quad +\| r^{\pm}_1(\cdot,z)\|_{L^{r'}(|\eta|>R)}\|1/ |\cdot-\lambda|\|_{L^{r}(|\eta|>R)}\\
&\leq c(N,m)E^{-(m-1)/2},
\end{align*}
for $\lambda \in T$ and some fixed $R >1$. Now, since $(X^2_k-X^1_k)(\cdot,\zeta)$ is a continuous $L^q$ solution, $q >2$, of the non-homogeneous $\bar \partial$-equation \eqref{dbarx21} we have, from Lemma \ref{lemtech},
\begin{align}\nonumber
\|(X^2_k-X^1_k)(\cdot,\zeta)\|_{L^q(\C)} \leq c(N) \|(r_2-r_1)(\cdot,z)X^2_k(\cdot,\zeta)\|_{L^{q'}(\C)},
\end{align}
where $1/q' = 1/q + 1/2$. From the fact that $q' < 2$ and $X^2_k$ satisfies \eqref{estx1}, \eqref{estx2}, using H\"older's inequality as above we obtain
\begin{align}\label{estx12q}
\|(X^2_k-X^1_k)(\cdot,\zeta)\|_{L^q(\C)} \leq c(N)\|r_2-r_1\|_{L^{s,s'}(\C)},
\end{align}
for some $1<s<2<s'<+\infty$, therefore
\begin{align} \label{estj2}
J_2(\lambda,\zeta) \leq c(N,m)E^{-(m-1)/2}\|r_2-r_1\|_{L^{s,s'}(\C)},
\end{align}
for $\lambda, \zeta \in T$. Putting together \eqref{estj1}, \eqref{estj2} and \eqref{estj3} we find
\begin{align}
&\| (\Omega^{\pm,2}_k-\Omega^{\pm,1}_k)(\lambda,\cdot)\|_{L^2(T)}\\ \nonumber
&\quad \leq c(N,m)\Bigg(\delta r'_a + \frac{a}{a-1}\left\|\left(\frac{1}{|\lambda|}+|\lambda|\right)(r_2 - r_1)\right\|_{L^{s,s'}(\C \setminus D_a)} \\ \nonumber
&\qquad + E^{-(m-1)/2} \|r_2-r_1\|_{L^{s,s'}(\C)} \Bigg),
\end{align} 
for $\lambda \in T$, since $r_2-r_1 \leq \left(\frac{1}{|\lambda|}+|\lambda|\right)(r_2 - r_1)$.\smallskip

We can finally put everything together and find
\begin{align*}
&\|K^{\pm}_2-K^{\pm}_1\|_{L^2(T)}\leq c(N,m)\Bigg[E^{1/2}\|\rho_2-\rho_1\|_{L^2(T^2)}\\ \nonumber
&\quad +\left\|\left(\frac{1}{|\lambda|}+|\lambda|\right)(r_2 - r_1)\right\|_{L^{s,s'}(\C)}\\ \nonumber
&\quad + E^{-1}\left(\delta r'_a + \frac{a}{a-1}\left\|\left(\frac{1}{|\lambda|}+|\lambda|\right)(r_2 - r_1)\right\|_{L^{s,s'}(\C \setminus D_a)}\right) \Bigg],
\end{align*}
which finishes the proof of Proposition \ref{propKK}.
\end{proof}

\section{Proof of Theorem \ref{maintheo}} \label{secpf}
We start from formula \eqref{formv} and estimate the differences $A_2-A_1$, $B_2 - B_1$ and $C_2-C_1$ separately. We have
\begin{align} \nonumber
A_2-A_1 &= \frac{1}{\pi}\int_{\C}\bigg[-\frac{i\sqrt{E}}{2}\left(\frac{1}{\zeta}+\bar \zeta\right)\left( (r_2-r_1)\overline{e_2}+r_1(\overline{e_2}-\overline{e_1})\right)\\ \nonumber
&\quad+ (r_2-r_1)\overline{\partial_{\bar z}e_2}+r_1\overline{(\partial_{\bar z}e_2 - \partial_{\bar z}e_1)}\bigg]d\Ree \zeta\, d\Imm \zeta.
\end{align}
Using several times H\"older's inequality \eqref{holder}, we find
\begin{align} \label{estaa1}
&|A_2 - A_1| \leq \frac{1}{\pi}\bigg[\frac{\sqrt{E}}{2}\bigg(\left\|\left(\frac{1}{\zeta}+\bar \zeta\right) (r_2-r_1)\right\|_{L^1(\C)}\|e_2(z,\cdot)\|_{L^{\infty}(\C)}\\ \nonumber
&\quad +\left\|\left(\frac{1}{\zeta}+\bar \zeta\right) r_1\right\|_{L^{\tilde p'}(\C)}\|e_2(z,\cdot)-e_2(z,\cdot)\|_{L^{\tilde p}(\C)}\bigg)\\ \nonumber
&\quad +\|r_2-r_1\|_{L^p(\C)}\|\partial_{\bar z}e_2(z,\cdot)\|_{L^{p'}(\C)}\\ \nonumber
&\quad +\|r_1\|_{L^{\tilde p'}(\C)}\|\partial_{\bar z}e_2(z,\cdot)-\partial_{\bar z}e_1(z,\cdot)\|_{L^{\tilde p}(\C)}\bigg],
\end{align}
for $1<p<2$, $\tilde p$ such that $1 / \tilde p = 1/p - 1/2$ and $1/p + 1/p' = 1 / \tilde p + 1 / \tilde p' =1$.

In order to estimate $\|e_2(z,\cdot)-e_2(z,\cdot)\|_{L^{\tilde p}}$ and $\|\partial_{\bar z}e_2(z,\cdot)-\partial_{\bar z}e_1(z,\cdot)\|_{L^{\tilde p}}$ we just remark that from the definition \eqref{defe} $e_j(z,\lambda)$ satisfies
\begin{equation}\nonumber
\frac{\partial}{\partial \bar \lambda}e_j(z,\lambda) = r_j(z,\lambda)\overline{e_j(z,\lambda)}, \quad j=1,2,
\end{equation}
for all $\lambda \in \C$, with $\lim_{\lambda \to \infty}e_j(z,\lambda) = 1$. The operator $\partial_{\bar \lambda}^{-1}$ defined in \eqref{defdbarr}, which intervene in the integral equation defining $e_j(\lambda)$, satisfies the estimate
\begin{equation} \label{estdbarr}
|\partial_{\bar \lambda}^{-1}f(\lambda)|\leq c(p)\|f\|_{L^p_2(\C)}|\lambda|^{2/p-1}, \qquad \text{for } |\lambda| > 1,\quad  p >2,
\end{equation}
which is proved in \cite[Ch. III, (4.16)]{V} (see \eqref{deflpnu} for the definition of $L^p_2(\C)$). As already stated at the beginning of Section \ref{sechil}, since $r_j(\lambda) \in L^p_2(\C)$, equation \eqref{defe} is uniquely solved in $L^q_0(\C)$ ($p/(p-1)\leq q <2$) and in addition $e(\lambda)$ is continuous (see \cite{V}). Then $e(\lambda)$ is $L^{\infty}(\C)$ and since $r_j(\lambda)$ is bounded in $L^p_2(\C)$ for every $p \geq 1$, we obtain
\begin{equation}
|e(\lambda) -1| \leq c(N,p)|\lambda|^{2/p-1}, \qquad |\lambda| >1,
\end{equation}
for every $p >2$, which yields $\|e(\cdot)-1\|_{L^q(\C)} \leq c(N,q)$, for every $q >2$ (since $e$ is continuous on $\C$). The same kind of argument yields 
\begin{align}
\|\partial_{\bar z} e(\cdot)\|_{L^q(\C)} \leq c(N,q), \quad \|\partial_{ z} e(\cdot)\|_{L^q(\C)} \leq c(N,q), \quad \text{for any } q >2.
\end{align}
Thus it is possible to use the same ideas as in \cite[Lemma 4.1]{S2} to estimate $e_j$ as follows:
\begin{align} \label{estmula1}
&\sup_{z \in \C} \|e_2(z,\cdot) - e_1(z,\cdot) \|_{L^{\tilde p}(\C)} \leq c(D,N,p,m) \| r_2 -r_1\|_{L^p(\C)},\\ \label{estmula2}
&\sup_{z \in \C} \left\| \nabla e_2(z,\cdot) - \nabla e_1(z,\cdot)\right\|_{L^{\tilde p}(\C)} \leq c(D,N,p,m)\Bigg[  \| r_2 -r_1\|_{L^p(\C)} \\ \nonumber
&\qquad+ \sqrt{E}\left(\left\| \left(|\lambda| + \frac{1}{|\lambda|}\right)|r_2 - r_1|\right\|_{L^p(\C)} + \| r_2 -r_1\|_{L^p(\C)}  \right) \Bigg],
\end{align}
with $p$ and $\tilde p$ defined above and $\nabla$ is taken with respect to $z$. The proof of \eqref{estmula1} is exactly the same as that of the first estimate of \cite[Lemma 4.1]{S2} while for \eqref{estmula2} the only differences are in some signs, due to \eqref{defr2}, and do not affect the result. 

From Lemma \ref{lemb} we find
\begin{align} \label{estrr1}
\left\|\left(\frac{1}{\zeta}+\bar \zeta\right) r_1\right\|_{L^{\tilde p'}(\C)} &\leq c(N,m,p')E^{-m/2},\\ \label{estrr2}
\|r_1\|_{L^{\tilde p'}(\C)} &\leq c(N,m,p')E^{-m/2}.
\end{align}

Combining estimates \eqref{estmula1}-\eqref{estrr2} with \eqref{estaa1} we find, for a fixed $p \in ]1,2[$,
\begin{align*}
|A_2-A_1|\leq c(D,N,m)\left(\sqrt{E}\left\|\left(\frac{1}{\zeta}+\bar \zeta\right) (r_2-r_1)\right\|_{L^1(\C)}\! \! \! \! \! \!+ \| r_2 -r_1\|_{L^p(\C)}\right).
\end{align*}
Then, using Proposition \ref{propestr} we obtain
\begin{align}\label{esta12}
|A_2-A_1|&\leq c(D,N,m)\bigg[ E^{-1/2}\left(E^{1/2}+\kappa \log(3+\delta^{-1})\right)^{-(m-2)} \\ \nonumber
&\quad + \delta(3+\delta^{-1})^{2\kappa(l+1)}\bigg],
\end{align}
for $\kappa$ and $\delta$ as in the statement. We now pass to $B_2-B_1$, which is given by
\begin{align} \nonumber
B_2-B_1 &= \frac{1}{2\pi i}\int_{|\zeta|=1}\bigg\{\left(\partial_z K_2(\zeta)-\partial_z K_1(\zeta)\right)\\ \nonumber
&\quad \times \left[-1+\frac{1}{\pi}\int_{\C}r_2(\lambda,z)\overline{\Omega^2_2(\lambda,\zeta)}d\Ree \lambda \, d\Imm \lambda \right]\\ \nonumber
&\quad + \partial_z K_1(\zeta)\bigg[\frac{1}{\pi}\int_{\C}(r_2-r_1)(\lambda,z)\overline{\Omega_2^2(\lambda,\zeta)}d\Ree \lambda\, d\Imm \lambda \\ \nonumber
&\quad + \frac{1}{\pi}\int_{\C}r_1(\lambda,z)\left(\overline{\Omega_2^2(\lambda,\zeta)}-\overline{\Omega_2^1(\lambda,\zeta)}\right)d\Ree \lambda\, d\Imm \lambda\bigg]\\ \nonumber
&\quad +\frac{(K_2-K_1)(\zeta)}{\pi}\int_{\C}r_2(\lambda,z)\bigg[-\frac{i\sqrt{E}}{2}\left(\frac{1}{\lambda}+\bar \lambda\right)\overline{\Omega_2^2(\lambda,\zeta)}\\ \nonumber
&\quad +\overline{\partial_{\bar z}\Omega_2^2(\lambda,\zeta)}\bigg]d\Ree \lambda\,d\Imm \lambda + \frac{K_1(\zeta)}{2\pi}\int_{\C}(r_2-r_1)(\lambda,z)\\ \nonumber
&\quad \times\left[-\frac{i\sqrt{E}}{2}\left(\bar \lambda + \frac{1}{\lambda}\right)\overline{\Omega_2^2(\lambda,\zeta)}+\overline{\partial_{\bar z}\Omega_2^2(\lambda,\zeta)}\right]\\ \nonumber
&\quad +r_1(\lambda,z)\bigg[-\frac{i\sqrt{E}}{2}\left(\bar \lambda + \frac{1}{\lambda}\right)\left(\overline{\Omega_2^2(\lambda,\zeta)}-\overline{\Omega_2^1(\lambda,\zeta)}\right)\\ \nonumber 
&\quad +\overline{\partial_{\bar z}\Omega_2^2(\lambda,\zeta)}-\overline{\partial_{\bar z}\Omega_2^1(\lambda,\zeta)} \bigg] d\Ree \lambda \, d\Imm \lambda\bigg\}d\zeta.
\end{align}
This yields
\begin{align*}
|B_2-B_1| &\leq \frac{1}{2\pi}\left\|\partial_z K_2-\partial_z K_1\right\|_{L^2(T)} \\ &\quad \times \left\|-1+\frac{1}{\pi}\int_{\C}r_2(\lambda,z)\overline{\Omega^2_2(\lambda,\cdot)}d\Ree \lambda \, d\Imm \lambda \right\|_{L^2(T)}\\ \nonumber
&\quad + \|\partial_z K_1\|_{L^2(T)}\bigg\|\frac{1}{\pi}\int_{\C}(r_2-r_1)(\lambda,z)\overline{\Omega_2^2(\lambda,\zeta)}d\Ree \lambda\, d\Imm \lambda \\ \nonumber
&\quad + \frac{1}{\pi}\int_{\C}r_1(\lambda,z)\left(\overline{\Omega_2^2(\lambda,\cdot)}-\overline{\Omega_2^1(\lambda,\zeta)}\right)d\Ree \lambda\, d\Imm \lambda\bigg\|_{L^2(T)}\\ \nonumber
&\quad +\frac{\|K_2-K_1\|_{L^2(T)}}{\pi}\bigg\|\int_{\C}r_2(\lambda,z)\bigg[-\frac{i\sqrt{E}}{2}\left(\frac{1}{\lambda}+\bar \lambda\right)\overline{\Omega_2^2(\lambda,\cdot)}\\ \nonumber
&\quad +\overline{\partial_{\bar z}\Omega_2^2(\lambda,\cdot)}\bigg]d\Ree \lambda\,d\Imm \lambda \bigg\|_{L^2(T)}+ \frac{\|K_1\|_{L^2(T)}}{2\pi}\bigg\|\int_{\C}(r_2-r_1)(\lambda,z)\\ \nonumber
&\quad \times\left[-\frac{i\sqrt{E}}{2}\left(\bar \lambda + \frac{1}{\lambda}\right)\overline{\Omega_2^2(\lambda,\cdot)}+\overline{\partial_{\bar z}\Omega_2^2(\lambda,\cdot)}\right]\\ \nonumber
&\quad +r_1(\lambda,z)\bigg[-\frac{i\sqrt{E}}{2}\left(\bar \lambda + \frac{1}{\lambda}\right)\left(\overline{\Omega_2^2(\lambda,\cdot)}-\overline{\Omega_2^1(\lambda,\cdot)}\right)\\ \nonumber 
&\quad +\overline{\partial_{\bar z}\Omega_2^2(\lambda,\cdot)}-\overline{\partial_{\bar z}\Omega_2^1(\lambda,\cdot)} \bigg] d\Ree \lambda \, d\Imm \lambda \bigg\|_{L^2(T)}.
\end{align*}
Using H\"older's inequality, Lemma \ref{lemestrr}, estimates \eqref{estomega2}, \eqref{estkpm}, \eqref{estx12q}, \eqref{estomega+}, \eqref{estomega-}, \eqref{estK1} we find
\begin{align*}
|B_2-B_1| &\leq c(N,m)\Bigg[\left\|\partial_z K_2-\partial_z K_1\right\|_{L^2(T)} + \left\|\left(\frac{1}{|\lambda|}+|\lambda|\right)(r_2 - r_1)\right\|_{L^{s,s'}(\C)}\\
&\quad + \frac{\|K_2-K_1\|_{L^2(T)}}{\sqrt{E}^{m-1}}+ \frac{\sup_{\zeta \in T}\|\partial_{\bar z}\Omega_2^2(\cdot,\zeta)-\partial_{\bar z}\Omega_2^1(\cdot,\zeta)\|_{L^q(\C)}}{\sqrt{E}^{m-1}}\Bigg],
\end{align*}
for some $1<s<2<s'<+\infty$, $q > 2$. We estimate the last term using Lemma \ref{lemtech}. Since $(X^{\pm,2}_k-X^{\pm,1}_k)(\cdot,\zeta)$, defined in \eqref{defxpm}, is a continuous $L^q$ solution, $q >2$, of the non-homogeneous $\bar \partial$-equation \eqref{dbarpm} we have, from Lemma \ref{lemtech},
\begin{align*}
\|(X^{\pm,2}_k-X^{\pm,1}_k)(\cdot,\zeta)\|_{L^q(\C)} &\leq c(N)\Big( \|(r^{\pm}_2-r^{\pm}_1)(\cdot,z)X^2_k(\cdot,\zeta)\|_{L^{q'}(\C)}\\
&\quad + \|r^{\pm}_1(\cdot,z)(X^2_k-X^1_k)(\cdot,\zeta)\|_{L^{q'}(\C)} \\
&\quad + \|(r_2-r_1)(\cdot,z)X^{\pm,2}_k(\cdot,\zeta)\|_{L^{q'}(\C)}\Big) \\
&\leq c(N,m)\bigg(\sqrt{E}\left\|\left(\frac{1}{|\lambda|}+|\lambda|\right)(r_2 - r_1)\right\|_{L^{s,s'}(\C)}\\
&\quad +\sqrt{E}^{-(m-1)}\|(X^2_k-X^1_k)(\cdot,\zeta)\|_{L^{r}(\C)}\\
&\quad + \sqrt{E}^{-(m-1)}\|(r_2-r_1)(\cdot,z)\|_{L^{s,s'}(\C)}\bigg)
\end{align*}
where $1/q' = 1/q + 1/2$ and $r >2$. Here we used several times H\"older's inequality, the fact that $q' < 2$ and that $X^j_k$, $X^{\pm,j}_k$ satisfy \eqref{estx1}, \eqref{estx2} and \eqref{estx+}, \eqref{estx-}. From \eqref{estx12q} and the fact that $r_2-r_1 \leq \left(\frac{1}{|\lambda|}+|\lambda|\right)(r_2 - r_1)$ we obtain
\begin{align*}
\|(X^{\pm,2}_k-X^{\pm,1}_k)(\cdot,\zeta)\|_{L^q(\C)} \leq c(N,m)\sqrt{E}\left\|\left(\frac{1}{|\lambda|}+|\lambda|\right)(r_2 - r_1)\right\|_{L^{s,s'}(\C)},
\end{align*}
 which yields
\begin{align}\label{estomega12qq}
\|(\nabla \Omega^{2}_k-\nabla \Omega^{1}_k)(\cdot,\zeta)\|_{L^q(\C)} \leq c(N,m)\sqrt{E}\left\|\left(\frac{1}{|\lambda|}+|\lambda|\right)(r_2 - r_1)\right\|_{L^{s,s'}(\C)},
\end{align}
for some $1<s<2<s'<+\infty$, $q >2$.

Now, Propositions \ref{propK} and \ref{propKK} as well as estimate \eqref{estomega12qq} gives
\begin{align*}
|B_2-B_1| &\leq c(N,m)\Bigg[E^{1/2}\|\rho_2-\rho_1\|_{L^2(T^2)}\\ \nonumber
&\quad +\left\|\left(\frac{1}{|\lambda|}+|\lambda|\right)(r_2 - r_1)\right\|_{L^{s,s'}(\C)}\\ \nonumber
&\quad + E^{-1}\left(\delta r'_a + \frac{a}{a-1}\left\|\left(\frac{1}{|\lambda|}+|\lambda|\right)(r_2 - r_1)\right\|_{L^{s,s'}(\C \setminus D_a)}\right) \Bigg].
\end{align*}
From Lemma \ref{lemdifh} and \eqref{estexp} we find
\begin{align}
\delta r'_a \leq c(D,N,m) e^{4(l+1)\sqrt{E}(a-1)}\delta.
\end{align}
Like in the proof of Proposition \ref{propestr} we define
\begin{equation} \label{defa}
 a = 1 + \frac{\kappa \log (3+\delta^{-1})}{\sqrt{E}},
\end{equation}
for $\kappa < 1/4(l+1)$. Note that
\begin{equation}
\frac{a}{a-1} = 1+ \frac{\sqrt{E}}{\kappa \log (3+\delta^{-1})}\leq 1+\sqrt{E},
\end{equation}
for $\delta < \frac{1}{e^{1/\kappa}-3}$. Repeating the proof of Proposition \ref{propestr} we obtain
\begin{align}
&\delta r'_a + \frac{a}{a-1}\left\|\left(\frac{1}{|\lambda|}+|\lambda|\right)(r_2 - r_1)\right\|_{L^{s,s'}(\C \setminus D_a)} \\ \nonumber
&\quad \leq c(D,N,m)\left( \delta(3+\delta^{-1})^{4\kappa(l+1)} + E^{-\frac 1 2}\left(E^{\frac 1 2}+\kappa \log(3+\delta^{-1})\right)^{-(m-2)}\right),
\end{align}
for $\delta < \frac{1}{e^{1/\kappa}-3}$, $\kappa < 1/4(l+1)$. Then, using Propositions \ref{proprho} and \ref{proprho} we get
\begin{align} \label{estb12}
 |B_2-B_1| &\leq c(D,N,m)\bigg( \sqrt{E}\delta(3+\delta^{-1})^{4\kappa(l+1)}\\ \nonumber 
&\quad + {E}^{-1}\left(E^{\frac 1 2}+\kappa \log(3+\delta^{-1})\right)^{-(m-2)}\bigg),
\end{align}
for $\delta < \frac{1}{e^{1/\kappa}-3}$, $\kappa < 1/4(l+1)$, $E >E_4$.

We need now to estimate $C_2-C_1$, which can be written as follows:
\begin{align} \nonumber
C_2-C_1 &= -\frac{1}{2\pi i}\int_{|\zeta|=1}\bigg\{(\overline{\partial_{\bar z} K_2-\partial_{\bar z} K_1})(\zeta)\\ \nonumber
&\quad \times \left[\frac{1}{\pi}\int_{\C}r_2(\lambda,z)\overline{\Omega^2_1(\lambda,\zeta)}d\Ree \lambda \, d\Imm \lambda \right]\\ \nonumber
&\quad + \overline{\partial_{\bar z} K_1}(\zeta)\bigg[\frac{1}{\pi}\int_{\C}(r_2-r_1)(\lambda,z)\overline{\Omega_1^2(\lambda,\zeta)}d\Ree \lambda\, d\Imm \lambda \\ \nonumber
&\quad + \frac{1}{\pi}\int_{\C}r_1(\lambda,z)\left(\overline{\Omega_1^2(\lambda,\zeta)}-\overline{\Omega_1^1(\lambda,\zeta)}\right)d\Ree \lambda\, d\Imm \lambda\bigg]\\ \nonumber
&\quad +\frac{(\overline{K_2-K_1})(\zeta)}{\pi}\int_{\C}r_2(\lambda,z)\bigg[-\frac{i\sqrt{E}}{2}\left(\frac{1}{\lambda}+\bar \lambda\right)\overline{\Omega_1^2(\lambda,\zeta)}\\ \nonumber
&\quad +\overline{\partial_{\bar z}\Omega_1^2(\lambda,\zeta)}\bigg]d\Ree \lambda\,d\Imm \lambda + \frac{\overline{K_1}(\zeta)}{2\pi}\int_{\C}(r_2-r_1)(\lambda,z)\\ \nonumber
&\quad \times\left[-\frac{i\sqrt{E}}{2}\left(\bar \lambda + \frac{1}{\lambda}\right)\overline{\Omega_1^2(\lambda,\zeta)}+\overline{\partial_{\bar z}\Omega_1^2(\lambda,\zeta)}\right]\\ \nonumber
&\quad +r_1(\lambda,z)\bigg[-\frac{i\sqrt{E}}{2}\left(\bar \lambda + \frac{1}{\lambda}\right)\left(\overline{\Omega_1^2(\lambda,\zeta)}-\overline{\Omega_1^1(\lambda,\zeta)}\right)\\ \nonumber 
&\quad +\overline{\partial_{\bar z}\Omega_1^2(\lambda,\zeta)}-\overline{\partial_{\bar z}\Omega_1^1(\lambda,\zeta)} \bigg] d\Ree \lambda \, d\Imm \lambda\bigg\}d\zeta.
\end{align}
We proceed exactly as for $B_2-B_1$ and we find
\begin{align*}
|C_2-C_1|&\leq c(N,m)\Bigg[\left\|\left(\frac{1}{|\lambda|}+|\lambda|\right)(r_2 - r_1)\right\|_{L^{s,s'}(\C)} \\ \nonumber
&\quad + \frac{\|K_2-K_1\|_{L^2(T)}}{\sqrt{E}^{m-1}}+ \frac{\left\|\partial_{\bar z} K_2-\partial_{\bar z} K_1\right\|_{L^2(T)}}{\sqrt{E}^{m}} \Bigg],
\end{align*}
for some $1<s<2<s'<+\infty$. Here we used again H\"older's inequality as well as Lemma \ref{lemestrr}, estimates \eqref{estomega1}, \eqref{estkpm}, \eqref{estx12q}, \eqref{estomega+}, \eqref{estomega-} and \eqref{estomega12qq}. Using Propositions \ref{propK} and \ref{propKK} with $a$ defined in \eqref{defa} and arguing as for $B_2-B_1$ we obtain, with Propositions \ref{proprho} and \ref{propestr},
\begin{align}\label{estc12}
|C_2-C_1|&\leq c(D,N,m) \bigg( \delta(3+\delta^{-1})^{4\kappa(l+1)}\\ \nonumber 
&\quad + {E}^{-1}\left(E^{\frac 1 2}+\kappa \log(3+\delta^{-1})\right)^{-(m-2)}\bigg),
\end{align}
for $\delta < \frac{1}{e^{1/\kappa}-3}$, $\kappa < 1/4(l+1)$, $E >E_4$.

We can now put estimates \eqref{esta12}, \eqref{estb12} and \eqref{estc12} together and from \eqref{formv} find
\begin{align}
\|v_2-v_1\|_{L^{\infty}(D)}&\leq c(D,N,m)\bigg( {E}\delta(3+\delta^{-1})^{4\kappa(l+1)}\\ \nonumber 
&\quad + \left(E^{\frac 1 2}+\kappa \log(3+\delta^{-1})\right)^{-(m-2)}\bigg),
\end{align}
for $\delta < \frac{1}{e^{1/\kappa}-3}$, $\kappa < 1/4(l+1)$, $E >E_1 = \max (E_0,E_2,E_3,E_4)$. Now, for every $0 < \tau < 1$ there is a $0 <\kappa< 1/4(l+1)$ such that $\tau = 1 - 4\kappa(l+1)$. Then we have
\begin{align}
\|v_2-v_1\|_{L^{\infty}(D)}&\leq c(D,N,m)\left( {E}\delta^{\tau}+ \left(E^{\frac 1 2}+(1-\tau) \log(3+\delta^{-1})\right)^{-(m-2)}\right),
\end{align}
for $\delta < \delta_{\tau}$ and $E >E_1$. This finishes the proof of Theorem \ref{maintheo}. \qed

\end{document}